\documentclass[a4paper,12pt]{article}
\usepackage{amsmath,fancyhdr,amssymb,amsthm, geometry, enumerate, graphicx ,amsfonts,hyperref,mathrsfs}
\usepackage{authblk}
\usepackage[utf8]{inputenc}
\usepackage{tikz}
\usetikzlibrary{matrix,shapes,arrows,positioning,chains}

\geometry{bottom=4cm, top=4cm, left=2cm, right=3cm}

\evensidemargin 1pt
\oddsidemargin 1pt
\author{Radhika Vasisht $^1$ and Ruchi Das $^{1,\dag}$}
\title{Generalizations of expansiveness in Non-Autonomous Discrete Systems } 

\theoremstyle{definition}
\newtheorem{defn}{Definition}[section]
\providecommand{\keywords}[1]{\textbf{Keywords :} #1}
\providecommand{\msc}[1]{\textbf{MSC(2010)} #1}
\theoremstyle{plain}
\newtheorem{thm}{Theorem}[section]

\newtheorem{Cor}{Corollary}[section]
\newtheorem{exm}{Example}[section]
\newtheorem{rmk}{Remark}[section]

\begin{document}
\date{}
\maketitle

\begin{abstract}
In this paper, we define and study generalizations of expansiveness, namely n- expansiveness, $\aleph_0$-expansiveness, continuum-wise expansive and meagre expansiveness for non-autonomous discrete dynamical systems. We discuss various properties of such non-autonomous systems and give necessary examples. We prove results related to non-existence of $\aleph_0$-expansive and meagre expansive non-autonomous system on certain spaces. We also study relation between $\aleph_0$-expansive and meagre-expansive non-autonomous systems.

\keywords{Non-autonomous dynamical systems, n-expansiveness, $\aleph_0$-expansiveness, meagre-expansiveness}
\\ 
\msc{Primary 54H20; Secondary 37B55, 54B20}

\end{abstract}

\renewcommand{\thefootnote}{\fnsymbol{footnote}}
\footnotetext{\hspace*{-5mm}
\renewcommand{\arraystretch}{1}
\begin{tabular}{@{}r@{}p{15cm}@{}}
$^\dag$& the corresponding author. Email address: rdasmsu@gmail.com (R. Das)\\
$^1$&Department of Mathematics, University of Delhi, Delhi-110007, India\\
\end{tabular}}

\section{Introduction}
The theory of dynamical systems which deals with the study of systems governed by a consistent set of laws over time such as difference and differential equations is one of the very significant and applicable branches of mathematics.  Beginning with the contributions of Poincar\'e and Lyapunov, the study of dynamical systems has seen significant developments in the recent years.  This theory has gained the interest of reserchers worldwide owing to its useful connections with many different areas of mathematics \cite{MR1963683, MR3862772, MR3516121, vasisht2019induced}.

A system which has no external input and unfolds according to a same consistent law is the autonomous discrete dynamical system. The study of non-autonomous discrete dynamical systems helps in classifying the behaviour of various natural phenomenons which cannot be estimated with precision by autonomous systems. Most of the natural phenomenons, whether it is the pattern of day and night or the seasons or climatic conditions which vary over time are subjected to time-dependent extrinsic forces and their modeling leads to the theory of non-autonomous discrete dynamical systems. The mathematical theory of non-autonomous systems is thus more involved than the theory of autonomous systems. In the recent past, the theory of such systems has developed into a highly active field related to, yet considerably distinct from that of classical autonomous dynamical systems \cite{MR3584171, vasisht2018stronger, shao2019some, MR3770303, MR3842139, MR2960260, vasisht2018furstenberg}.  This development was motivated by problems of applied mathematics, in particular, in population biology and physics \cite{MR2293842, MR3173561}.

Let $(X,d)$ be a metric space and $\phi_n:X\rightarrow X,n\in \mathbb{N}$ be a continuous map. Consider the following non-autonomous discrete dynamical system (N D S) $(X,\phi_{1,\infty})$, where
\begin{equation} x_{n+1}=\phi_n(x_n) , n\in \mathbb{N}.\nonumber \end{equation}
For convenience we denote $({\phi_n})_{n =1}^\infty $ by $\phi_{1,\infty}$. Naturally, a difference equation of the form $x_{n+1}=\phi_n(x_n)$ can be thought of as the discrete analogue of a non-autonomous differential equation $\frac{dx}{dt}=\phi(x,t)$. 
Non-autonomous discrete dynamical systems were introduced by authors in \cite{MR1402417}. In various mathematical problems including those in the field of applied mathematics, we usually work with a sequence of maps instead of a single map.
\\In a dynamical system, orbits may have two opposing behaviors: to stay separated or stay close to each other. The classical notion of unstable homeomorphism (now known as expansiveness), introduced by Utz\cite{MR0038022}, deals with the former behavior of trajectories. Expansiveness is a very important and useful dynamical property. Roughly speaking, in an expansive dynamical system, every orbit can be accompanied by only one orbit with some certain constant. Expansive dynamical systems involve a large class of chaotic systems and in the last few decades an extensive study has been carried out on this property and its variants in both autonomous and non-autonomous systems \cite{MR3864292, MR3426052, MR3219409, MR3174280, das2018various}. One among these variants is the concept of n-expansiveness\cite{MR2837062} which weakens the restriction on every orbit thus allowing at most n companion orbits with a certain constant. The notion of $\aleph_0$-expansiveness\cite{MR3694815} is defined which allows at most countable companion orbits. Both these variants are weaken the notion of classical expansiveness and hence a natural question arises whether n-expansive systems and $\aleph_0$-expansive systems share the properties of expansive systems or not. In\cite{MR2837062, MR3694815}, these questions have been addressed for the autonomous systems. In this paper we attempt to answer such questions for non-autonomous dynamical systems. Recently, another variant of expansiveness, namely meagre-expansiveness was introduced and studied by authors in\cite{MR3759572} which provides a possible link between the expansive systems and descriptive set theory. We define and study meagre-expansive non-autonomous dynamical systems in this paper. We also define continuum-wise expansiveness for non-autonomous systems which helps in studying the relation between $\aleph_0$-expansiveness and meagre-expansiveness.
\\In section 2, we give preliminaries required for the other sections. In section 3, we define and study various properties of n-expansive and $\aleph_0$-expansive non-autonomous systems. We give a metric independent definition for both n-expansiveness and $\aleph_0$-expansiveness. We further prove that every $\aleph_0$-expansive non-autonomous system is cw-expansive. We establish the non-existence of $\aleph_0$-expansive equicontinuous non-autonomous system of homeomorphisms on an uncountable Lindel{\"o}f metric space. In section 4, we define and study various properties of meagre-expansive non-autonomous dynamical systems. We establish the non-existence of a meagre-expansive non-autonomous system of equicontinuous homeomorphism on a Lindel{\"o}f metric space. Further, we prove that on a locally connected metric space without isolated points, all $\aleph_0$-expansive systems are meagre-expansive.
\section{Preliminaries}
We recall the following concepts and notations.

For a metric space $X$ with metric $d$ and $\phi_n: X\rightarrow X, n\in \mathbb{N}$, a sequence of continuous maps, we call $\phi_{1,\infty}= \{\phi_n\}_{n=1}^{\infty}$ to be a non-autonomous system on $X$. The function $\phi_n\circ \phi_{n-1}\circ \cdots \circ \phi_1 $ for all $n=1,2,\ldots$ is denoted by $\phi_1^n$.
\\ For any $i\leq j$, $\phi_j\circ \phi_{j-1}\circ \cdots \circ \phi_{i+1}\circ \phi_i $ is denoted by $\phi_i^j$ and for $i>j$, $\phi_i^j$ is defined to be the identity map. For any $k>0$, we consider a non-autonomous map $\it[k^{th}-iterate$ of $\phi_{1,\infty}] (\phi_{1,\infty})^k = \{h_n\}_{n=1}^{\infty}$ on X, where $h_n= \phi_{nk}\circ \phi_{(n-1)k+k-1}\circ \cdots \cdots \circ \phi_{(n-1)k+2}\circ \phi_{(n-1)k+1}$, for all $n>0$.
\\For the non-autonomous system $h_{1,\infty}$ = $\{h_n\}_{n=1}^{\infty}$, where each $h_n$ is a homeomorphism, its inverse is defined to be $(h_{1,\infty}^{-1})=\{h_n^{-1}\}_{n=1}^{\infty}$.

\begin{defn}
A set $S\subseteq  X$ is said to be nowhere dense if $Int(\overline{S})=\emptyset$, where Int denotes the interior of a set and $\overline{S}$ denotes the closure of $A$.
\end{defn}

\begin{defn}
Let $(X,d)$ be a metric space and $\phi_n:X\rightarrow X$ be a sequence of continuous maps for $n=1,2,\ldots$. The non-autonomous system $(X,\phi_{1,\infty})$ is said to be $\it{equicontinuous}$ at the point $x_0$ if for every $\epsilon>0$, there exists $\delta>0$ such that $d(\phi_1^n(x_0),\phi_1^n(y))<\epsilon$, for all $n>0$, whenever $d(y,x_0)<\delta$.
\\The non-autonomous system $(X,\phi_{1,\infty})$ is said to be $\it{equicontinuous}$ if it is equicontinuous at every point $x_0\in X$.
\end{defn}

\begin{defn} \cite{MR3174280}
A homeomorphism $\psi:X_1 \rightarrow X_2 $ is said to be a $\it{uniform}$ $\it{homeomorphism}$ if $\psi$ is uniformly continuous on $X_1$ and $\psi^{-1}$ is uniformly continuous on $X_2$. 
\end{defn}

\begin{defn}\cite{MR3174280}
Let $(X_1,d_1)$ and $(X_2,d_2)$ be two metric spaces with non-autonomous maps $\phi_{1,\infty}=\{\phi_n\}_{n=1}^{\infty}$ and $\psi_{1,\infty}=\{\psi_n\}_{n=1}^{\infty}$ respectively. If there is a homeomorphism $h:X_1\rightarrow X_2$ such that $h\circ \phi_n = \psi_n\circ h$, for all $n=1,2,\ldots$, then $\phi_{1,\infty}$ and $\psi_{1,\infty}$ are said to be $\it{conjugate}$ or $\it{h-conjugate}$. 
\\Also $\phi_{1,\infty}$ and $\psi_{1,\infty}$ are said to be $\it{uniformly}$ $\it{conjugate}$ or $\it{uniformly}$ $\it{h-conjugate}$, if $h:X\rightarrow Y$ is a uniform homeomorphism. 
\end{defn}

\begin{defn}\cite{MR3174280}
Let $(X,\phi_{1,\infty})$ be a non-autonomous system and $X'$ be a subset of $X$. Then $X'$ is said to be $\it{invariant}$ under $\phi_{1,\infty}$ if $\phi_n(X')\subseteq X'$, for all $n>0$ or $\phi_1^n(X')\subseteq X'$, for all $n>0$.
\end{defn}

\begin{defn}\cite{MR3779662}
Let $(X,\phi_{1,\infty})$ be a non-autonomous system. A point $p$ is said to be a $\it{fixed}$ $\it{point}$ for $(X,\phi_{1,\infty})$ if $\phi_1^n(p)=p$, for all $n>0$.
\\Fix$(X,\phi_{1,\infty})$ denotes the set of all fixed points of the non-autonomous system $(X,\phi_{1,\infty})$.
\end{defn}

\begin{defn}
Let $(X,\phi_{1,\infty})$ be a non-autonomous system. A point $p$ is said to be a $\it{periodic}$ $\it{point}$ for $(X,\phi_{1,\infty})$ if there exists $n>0$ such that $\phi_1^{nk}(p)=p$, for any $k> 0$ \cite{MR3779662}.
\\Per$(X,\phi_{1,\infty})$ denotes the set of all periodic points of the non-autonomous system $(X,\phi_{1,\infty})$.
\end{defn}

The concept of n-expansiveness for autonomous systems was introduced by C.Morales. 
\begin{defn}\cite{MR2837062}
Let $(Y,d)$ be a metric space and $g:Y\rightarrow Y$ be a continuous map. The system $(Y,g)$ or $g$ is said to be $\it{n-expansive}$ if there exists a constant $c>0$ called n-expansiveness constant such that for every $x\in Y$, the set $\{y\in Y:d(g^i(x),g^i(y))\leq c,$ for all $ i\in \mathbb{N}\}$ has at most n elements.
\end{defn}
Similarly, $g:Y\rightarrow Y$ is a n-expansive homeomorphism, if for every $x\in Y$, the set $\{y\in Y:d(g^i(x),g^i(y))\leq c,$ for all $ i\in \mathbb{Z}\}$ has at most n elements.

\begin{defn}\cite{MR3694815}
Let $(Y,d)$ be a metric space and $g:Y\rightarrow Y$ be a continuous map. The system $(Y,g)$ or $g$ is said to be $\it{\aleph_0-expansive}$ if there exists a constant $c>0$ called $\aleph_0$-expansiveness constant such that for every $x\in Y$, the set $\{y\in Y:d(g^i(x),g^i(y))\leq c,$ for all $ i\in \mathbb{N}\}$ has at most countable elements.
\end{defn}
Similarly, $g:X\rightarrow X$ is a $\aleph_0$-expansive homeomorphism, if for every $x\in Y$, the set $\{y\in Y:d(g^i(x),g^i(y))\leq c,$ for all $ i\in \mathbb{Z}\}$ has at most countable elements.

By a continuum we mean a compact, connected and non-degenerate metric space. A $\it{subcontinuum}$ is a non-empty subset of a metric space $Y$ which is a continuum with respect to the induced topology.
\begin{defn}
A subcontinuum is $\it{degenerated}$ if it has only one point.
\end{defn}

\begin{defn}\cite{MR1222517}
Let $(Y,d)$ be a metric space and $g:Y\rightarrow Y$, be continuous map. The system $(Y,g)$ or $g$ is said to be $\it{continuum wise(cw)}$-$\it{expansive}$ if there exists a $c>0$ such that every non-degenerate subcontinuum $C$ of $X$ satisfies $diam(g^n(C))>c$ for some $n\in \mathbb{N}$.
\end{defn}
Similarly, $g:Y\rightarrow Y$ is a $\it{continuum wise(cw)}$-expansive homeomorphism, if there exists a $c>0$ such that every non-degenerate subcontinuum $C$ of $X$ satisfies $diam(g^n(C))>c$ for some $n\in \mathbb{Z}$.
\\Recently, in \cite{MR3759572}, authors introduced another variant of expansiveness, namely, meagre-expansiveness.

\begin{defn}\cite{MR3759572}
Let $(Y,d)$ be a metric space and $g:Y\rightarrow Y$ be a of continuous map. The autonomous system $(Y,g)$ or $g$ is said to be $\it{meagre-expansive}$ if there exists a constant $c>0$ called meagre-expansiveness constant such that for every $x\in Y$, the set $\{y\in Y:d(g^i(x),g^i(y))\leq c,$ for all $i\in \mathbb{N}\}$ is nowhere dense.
\end{defn}
Similarly, $g:Y\rightarrow Y$ is a meagre-expansive homeomorphism, if for every $x\in Y$, the set $\{y\in Y:d(f^i(x),f^i(y))\leq c,$ for all $ i\in \mathbb{Z}\}$ is nowhere dense.

\begin{defn}\cite{MR3759572}
A Borel probability measure $\mu$ of a metric $(Y,d)$ is $\it{meagre-expansive}$ with respect to $(Y,g)$ if there exists $c>0$(called meagre-expansiveness constant) such that $\mu(IntS_{c}(x))$=0, for all $y\in Y$, where $S_c(y)=\{x\in Y: d(g^i(x),g^i(y))\leq c,i\in \mathbb{Z}$\}.
\end{defn}
 
\begin{defn}\cite{MR3219409}
Let $(X,d)$ be a compact metric space and $(X,\phi_{1,\infty})$ be a non-autonomous system. A finite open cover $\mathcal{B}$ is said to be a generator of $\phi_{1,\infty}$, if for every bisequence $\{B_n\}$ of members of $\mathcal{B}$, $\bigcap_{-\infty}^{\infty}(\phi_1^n)^{-1}(\overline{B_n})$ has at most one point. 
\end{defn} 
 
\begin{defn}
A point $x\in X$ is said to be a $\it{stable}$ $\it{point}$ for $\phi_{1,\infty}$ if for every $\epsilon>0$, there is a $\epsilon'>0$ such that $d(\phi_1^i(x),\phi_1^i(y))\leq \epsilon$ for all $i\in \mathbb{Z}$, whenever $d(x,y)\leq \epsilon'$. The set of stable points of $(X,\phi_{1,\infty})$ is denoted by $Stab(X,\phi_{1,\infty})$.
\end{defn}

\begin{defn}\cite{MR2197587}
Let $(Y,d)$ be a metric space and $g_n:Y\rightarrow Y,$ $n=1,2,\ldots$ be a sequence of continuous maps. Then
\begin{itemize}
\item[i.] the $\it{\omega}$-limit set of a point $y\in Y$ is given by $\omega(g_{1,\infty},y)=\{y'\in Y:\lim\limits_{k\to \infty} d(g_1^{n_k}(y),y')=0\}$, for some strictly increasing sequence $\{n_k\}$ of integers.
\item[ii.] the $\it{\alpha}$-limit set of a point $y\in Y$ is given by $\alpha(g_{1,\infty},y)=\{y'\in Y:\lim\limits_{k\to \infty} d(g_1^{n_k}(y),y')=0\}$, for some strictly decreasing sequence $\{n_k\}$ of integers.
\end{itemize}
\end{defn}

We say that $y\in Y$ has converging semi-orbits under $g_{1,\infty}$ if both $\omega(g_{1,\infty},y)$ and $\alpha(g_{1,\infty},y)$ are singletons. 
\\The set of points having converging semi-orbits in $Y$, is denoted by $A(Y,g_{1,\infty})$.

\section{n-expansiveness and $\aleph_0$-expansiveness for\\ Non-autonomous Systems}
In this section we define and study n-expansive and $\aleph_0$-expansive non-autonomous discrete dynamical systems.

\begin{defn}
The non-autonomous system $(X,\phi_{1,\infty})$ or $\phi_{1,\infty}$ is said to be $\it{n-expansive}$ if there exists a constant $c>0$ (called as an n-expansiveness constant) such that for every $x\in X$, the set $\{y\in X:d(\phi_1^i(x),\phi_1^i(y))\leq c,$ for all $ i\in \mathbb{N}\}$ has at most n elements.
\\
Similarly, a sequence of homeomorphisms $\phi_n:X\rightarrow X$ is said to be n-$expansive$, if for every $x\in X$, the set $\{y\in X:d(\phi_1^i(x),\phi_1^i(y))\leq c,$ for all $ i\in \mathbb{Z}\}$ has at most n elements.
\end{defn}
\begin{defn}
The non-autonomous system $(X,\phi_{1,\infty})$ or $\phi_{1,\infty}$ is said to be $\it{\aleph_{0}-expansive}$ if there exists a constant $c>0$ (called as an $\aleph_{0}$-expansiveness constant) such that for every $x\in X$, the set $\{y\in X:d(\phi_1^i(x),\phi_1^i(y))\leq c,$ for all $i\in \mathbb{N}\}$ has at most countable elements.
\\Similarly, a sequence of homeomorphisms $\phi_n:X\rightarrow X$ is said to be $\aleph_0$-expansive, if for every $x\in X$, the set $\{y\in X:d(\phi_1^i(x),\phi_1^i(y))\leq c,$ for all $ i\in \mathbb{Z}\}$ has at most countable elements.
\end{defn}
In \cite{MR3694815}, authors have generalized the concept of generators introduced by Keynes and Robertson\cite{MR0247031}. We use this generalized notion in non-autonomous setting to obtain the following theorems.

\begin{thm}
Let $(X,d)$ be a compact metric space and $\phi_n:X\rightarrow X$; $n=1,2,\ldots$ be  a sequence of homeomorphisms. The non-autonomous system $(X,\phi_{1,\infty})$ is $n-expansive$ if and only if there is a finite  open cover $\mathcal{B}$ of $X$ such that for every bisequence $\{B_m\}_{m=-\infty}^{\infty}$ of members of $\mathcal{B}$, $\bigcap_{m=-\infty}^{\infty}(\phi_1^m)^{-1}(\overline{B_m})$ has at most n elements.
\end{thm}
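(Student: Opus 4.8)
The plan is to establish the two implications separately, in each case adapting the classical generator argument of Keynes--Robertson by replacing the single iterate $f^m$ with the non-autonomous orbit map $\phi_1^m$, which for every $m\in\mathbb{Z}$ is a homeomorphism of $X$ because each $\phi_n$ is and $\phi_1^{-k}$ is interpreted as $(\phi_1^k)^{-1}$. Compactness of $X$ enters twice: once through the existence of a finite cover by small sets, and once through the Lebesgue number lemma.

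For the forward implication, suppose $\phi_{1,\infty}$ is $n$-expansive with constant $c$. First I would use compactness to choose a finite open cover $\mathcal{B}$ of $X$ all of whose members have diameter at most $c$. Given any bisequence $\{B_m\}$ drawn from $\mathcal{B}$, I would show that $\bigcap_{m=-\infty}^{\infty}(\phi_1^m)^{-1}(\overline{B_m})$ has at most $n$ points: if $x$ and $y$ both lie in it, then $\phi_1^m(x)$ and $\phi_1^m(y)$ both lie in $\overline{B_m}$, whose diameter is still at most $c$, so $d(\phi_1^m(x),\phi_1^m(y))\le c$ for every $m$. Fixing one point $x_0$ of the intersection (the empty case being trivial), every other point of the intersection then lies in the $n$-expansive set $\{y:d(\phi_1^m(x_0),\phi_1^m(y))\le c,\ \forall m\}$, which by hypothesis has at most $n$ elements. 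Hence the intersection has at most $n$ points, and $\mathcal{B}$ is the required cover.

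For the converse, suppose such a finite open cover $\mathcal{B}$ exists. I would invoke the Lebesgue number lemma to obtain $\delta>0$ such that every subset of $X$ of diameter less than $\delta$ lies in some member of $\mathcal{B}$, and then set $c=\delta/3$ (any constant with $2c<\delta$ works). Fix $x\in X$ and put $\Gamma=\{y:d(\phi_1^m(x),\phi_1^m(y))\le c,\ \forall m\in\mathbb{Z}\}$. For each $m$ the closed ball of radius $c$ about $\phi_1^m(x)$ has diameter at most $2c<\delta$, so I can pick $B_m\in\mathcal{B}$ containing it; then every $y\in\Gamma$ satisfies $\phi_1^m(y)\in B_m\subseteq\overline{B_m}$, giving $\Gamma\subseteq\bigcap_m(\phi_1^m)^{-1}(\overline{B_m})$. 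By hypothesis the right-hand side has at most $n$ points, so $\Gamma$ does too, and $c$ is an $n$-expansiveness constant for $\phi_{1,\infty}$.

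The routine parts are the diameter and Lebesgue-number estimates; the step requiring care is the bookkeeping of the non-autonomous maps $\phi_1^m$ for negative $m$, ensuring that each is a genuine homeomorphism so that the preimages $(\phi_1^m)^{-1}(\overline{B_m})$ and the two-sided intersection are well defined, and checking that the strict-versus-nonstrict inequalities in the diameter bounds line up (which is why a constant with $2c<\delta$ rather than $2c=\delta$ is chosen). Beyond this, the argument is a direct transcription of the autonomous generator theorem to the composed orbit maps.
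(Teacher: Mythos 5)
Your proof is correct and follows essentially the same route as the paper: the forward direction covers $X$ by finitely many sets of diameter at most $c$ (the paper uses open balls of radius $c/2$), and the converse uses the Lebesgue number lemma to trap the $n$-expansive set $\Gamma$ inside a single intersection $\bigcap_{m}(\phi_1^m)^{-1}(\overline{B_m})$. In fact, your converse is slightly more careful than the paper's: by taking $2c<\delta$ so that the whole closed ball about $\phi_1^m(x)$ fits inside one member $B_m$ of the cover, you guarantee that the bisequence $\{B_m\}$ is chosen independently of $y\in\Gamma$, whereas the paper picks $B_i$ from the pair $\{\phi_1^i(x),\phi_1^i(y)\}$, a choice that formally depends on $y$ and so does not immediately yield a single bisequence containing all of $S$; your variant closes that small gap.
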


\begin{proof}
Let $(X,\phi_{1,\infty})$ be n-expansive with constant of n-expansiveness $c>0$. Let $\mathcal{B}$ be a finite cover of $X$ consisting of open balls of radius $c/2$. Let $\{B_m\}$ be a bisequence of members of $\mathcal{B}$. Fix some $x\in \bigcap_{m\in \mathbb{Z}}(\phi_1^m)^{-1}(\overline{B_m})$. Then $\phi_1^m(x)\in \overline{B_m}$, for all $m\in Z$.

Now, for any $y\in \bigcap_{m\in \mathbb{Z}}(\phi_1^m)^{-1}(\overline{B_m})$, $d(\phi_1^m(x),\phi_1^m(y))\leq c$, for all $m\in \mathbb{Z}$.
\\Thus, by n-expansiveness of $(X,\phi_{1,\infty})$, we have that the set $\{y\in X :y\in \bigcap_{m\in \mathbb{Z}}(\phi_1^i)^{-1}(B_i)\}$ has almost n elements.

Conversely, let $\mathcal{B}$ be a finite open cover of $X$ such that for every bi-sequence $\{B_m\}_{m=-\infty}^{\infty}$ of members of $\mathcal{B}$, $\bigcap_{m\in \mathbb{Z}}(\phi_1^m)^{-1}(\overline{B_m})$ has at most n-elements. Let $c$ be a  Lebesgue number for $\mathcal{B}$. Fix some $x\in X$ and let $S=\{y\in X:d(\phi_1^m(x),\phi_1^m(y))\leq c,$ for all $m\in \mathbb{Z}\}$. Since $c$ is a Lebesgue number for $\mathcal{B}$; for every $y\in S$ and every pair $\{\phi_1^i(x),\phi_1^{i}(y)\}$, there exists $B_i\in \mathcal{B}$ such that  $\{\phi_1^i(x),\phi_1^{i}(y)\}\subseteq B_i$, which implies $y\in (\phi_1^i)^{-1}(B_i)$, for all $i\in \mathbb{Z}$ and for all $y\in S$. Thus, $S$ has at most n elements and hence $(X,\phi_{1,\infty})$ is n-expansive.
\end{proof}

\begin{thm}
Let $(X,d)$ be a compact metric space and $\phi_n:X\rightarrow X$; $n=1,2,\ldots$ be  a sequence of homeomorphisms. The non-autonomous system $(X,\phi_{1,\infty})$ is $\aleph_0-expansive$ if and only if there is a finite  open cover $\mathcal{B}$ of $X$ such that for every bi-sequence $\{B_m\}_{m=-\infty}^{\infty}$ of members of $\mathcal{B}$, $\bigcap_{m=-\infty}^{\infty}(\phi_1^m)^{-1}(\overline{B_m})$ has at most countable elements.
\end{thm}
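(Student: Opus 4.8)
The plan is to mirror the proof of Theorem 3.1 almost verbatim, replacing the cardinality bound ``at most $n$ elements'' with ``at most countably many elements'' throughout; the only arithmetic I rely on is that the relevant set inclusions transport a countable bound, so no structural ingredient beyond the one used for $n$-expansiveness is needed.

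For the forward implication, suppose $(X,\phi_{1,\infty})$ is $\aleph_0$-expansive with constant $c>0$. Using compactness I would take $\mathcal{B}$ to be a finite cover of $X$ by open balls of radius $c/2$. Given any bisequence $\{B_m\}_{m\in\mathbb{Z}}$, I fix a point $x\in\bigcap_{m}(\phi_1^m)^{-1}(\overline{B_m})$ (if this intersection is empty there is nothing to prove). For any other $y$ in the intersection, both $\phi_1^m(x)$ and $\phi_1^m(y)$ lie in $\overline{B_m}$, a set of diameter at most $c$, so $d(\phi_1^m(x),\phi_1^m(y))\le c$ for every $m\in\mathbb{Z}$. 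By $\aleph_0$-expansiveness the collection of such $y$ --- which is exactly the intersection --- is at most countable.

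For the converse, let $\mathcal{B}$ be a finite open cover with the stated countability property and let $\lambda$ be a Lebesgue number for $\mathcal{B}$; I would fix the expansiveness constant to be some $c$ with $2c<\lambda$. Fixing $x\in X$, set $S=\{y\in X: d(\phi_1^m(x),\phi_1^m(y))\le c \text{ for all } m\in\mathbb{Z}\}$. The step I want to emphasise is the one that makes the argument go through cleanly: any two points $y,y'\in S$ satisfy $d(\phi_1^m(y),\phi_1^m(y'))\le 2c<\lambda$ by the triangle inequality through $\phi_1^m(x)$, so for each $m$ the whole image $\phi_1^m(S)$ has diameter below the Lebesgue number and therefore lies in a single member $B_m\in\mathcal{B}$. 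This produces one bisequence $\{B_m\}$ with $S\subseteq\bigcap_m(\phi_1^m)^{-1}(B_m)\subseteq\bigcap_m(\phi_1^m)^{-1}(\overline{B_m})$, and the hypothesis forces $S$ to be at most countable, i.e.\ $(X,\phi_{1,\infty})$ is $\aleph_0$-expansive.

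The main obstacle I anticipate is precisely this last step: one must produce a single bisequence that simultaneously captures every point of $S$, rather than a $y$-dependent choice of $B_m$, since a finite cover admits continuum-many bisequences and a union over all of them would not preserve countability. Forcing $\mathrm{diam}(\phi_1^m(S))$ below the Lebesgue number by choosing $c<\lambda/2$ (instead of setting $c$ equal to the Lebesgue number) is what resolves this, and it is essentially the only place where the argument should be stated more carefully than a literal copy of the $n$-expansive proof.
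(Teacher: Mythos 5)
Your proof is correct, and its overall architecture (forward direction via a finite cover by open balls of radius $c/2$, converse via a Lebesgue number) is the same as the paper's. The difference lies precisely in the step you flagged, and it matters: the paper's converse takes $c$ \emph{equal} to a Lebesgue number of $\mathcal{B}$ and then, for each $y\in S$ and each $i$, picks some $B_i\in\mathcal{B}$ containing the pair $\{\phi_1^i(x),\phi_1^i(y)\}$, concluding directly that $S$ is at most countable. As written, that choice of bisequence depends on $y$, so the hypothesis only shows that each $y$ lies in \emph{some} countable intersection also containing $x$; since a finite cover admits continuum-many bisequences, the union of these intersections need not be countable, and the conclusion does not follow. (For $1$-expansiveness, i.e.\ the classical generator theorem, the $y$-dependence is harmless because each intersection containing both $x$ and $y$ forces $y=x$; that is exactly the case where the paper's phrasing would be fine, but it does not transport to the countable setting.) Your resolution --- take $2c<\lambda$ so that $\mathrm{diam}\,\phi_1^m(S)\le 2c<\lambda$ for every $m$, whence the whole image $\phi_1^m(S)$ sits inside a \emph{single} member $B_m$, producing one bisequence with $S\subseteq\bigcap_m(\phi_1^m)^{-1}(\overline{B_m})$ --- is the correct way to close this, and it costs nothing (the theorem only asserts existence of \emph{some} expansiveness constant). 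So your write-up is not merely equivalent to the paper's: it repairs a genuine gap in the paper's converse, and the same repair should be read back into the paper's Theorem 3.1 on $n$-expansiveness, whose proof has the identical defect.
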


\begin{proof}
Let $(X,\phi_{1,\infty})$ be $\aleph_0$-expansive with constant of $\aleph_0$-expansiveness $c>0$. Let $\mathcal{B}$ be a finite cover of $X$ consisting of open balls of radius $c/2$. Let $\{B_m\}$ be a bisequence of members of $\mathcal{B}$. Fix some $x\in \bigcap_{m\in \mathbb{Z}}(\phi_1^m)^{-1}(\overline{B_m})$. Then $\phi_1^m(x)\in \overline{B_m}$, for all $m\in Z$.

Now, for any $y\in \bigcap_{m\in \mathbb{Z}}(\phi_1^m)^{-1}(\overline{B_m})$, we have $d(\phi_1^m(x),\phi_1^m(y))\leq c$, for all, $m\in \mathbb{Z}$.
\\Thus, by $\aleph_0$-expansiveness of $(X,\phi_{1,\infty})$, we have that the set $\{y\in X:y\in \cap_{m\in \mathbb{Z}}(\phi_1^i)^{-1}(B_i)\}$ has at most countable elements.

Conversely, let $\mathcal{B}$ be a finite open cover of $X$ such that for every bi-sequence $\{B_m\}_{m=-\infty}^{\infty}$ of members of $\mathcal{B}$, $\bigcap_{m\in \mathbb{Z}}(\phi_1^m)^{-1}(\overline{B_m})$ has at most countable elements. Let $c$ be a  Lebesgue number for $\mathcal{B}$. Fix some $x\in X$ and let $S=\{y\in X:d(\phi_1^m(x),\phi_1^m(y))\leq c,$ for all $m\in \mathbb{Z}\}$. Since $c$ is a Lebesgue number for $\mathcal{B}$; for every $y\in S$ and every pair $\{\phi_1^i(x),\phi_1^{i}(y)\}$, there exists $B_i\in \mathcal{B}$ such that  $\{\phi_1^i(x),\phi_1^{i}(y)\}\subseteq B_i$, which implies $y\in (\phi_1^i)^{-1}(B_i)$, for all $i\in \mathbb{Z}$ and for all $y\in S$. Thus, $S$ has at most countable elements and hence $(X,\phi_{1,\infty})$ is $\aleph_0$-expansive.
\end{proof}

\begin{thm}
Let $(X_1,d_1,\phi_{1,\infty})$ and $(X_2,d_2,\psi_{1,\infty})$ be two non-autonomous systems such that $\phi_{1,\infty}$ is uniformly conjugate to $\psi_{1,\infty}$. If $\phi_{1,\infty}$ is n-expansive, then so is $\psi_{1,\infty}$.
\end{thm}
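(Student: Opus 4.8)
The plan is to produce an explicit n-expansiveness constant for $\psi_{1,\infty}$ out of the one for $\phi_{1,\infty}$, transporting companion orbits across the conjugacy $h$. First I would unpack the hypothesis: uniform conjugacy means there is a uniform homeomorphism $h:X_1\rightarrow X_2$ with $h\circ\phi_n=\psi_n\circ h$ for all $n$. Composing these relations (and, in the homeomorphism case, also using that $h\circ\phi_n=\psi_n\circ h$ forces $h\circ\phi_n^{-1}=\psi_n^{-1}\circ h$), a short induction gives the intertwining $h\circ\phi_1^{\,i}=\psi_1^{\,i}\circ h$ for every relevant index $i$. Equivalently, $\psi_1^{\,i}(h(w))=h(\phi_1^{\,i}(w))$ for all $w\in X_1$, which is the identity that will let me convert $\psi$-orbits into $\phi$-orbits.

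Next I would fix the constants. Let $c>0$ be an n-expansiveness constant for $\phi_{1,\infty}$. Because $h^{-1}$ is uniformly continuous, there exists $e>0$ such that $d_2(a,b)\leq e$ implies $d_1(h^{-1}(a),h^{-1}(b))\leq c$ for all $a,b\in X_2$. I claim $e$ is an n-expansiveness constant for $\psi_{1,\infty}$. To see this, fix $x\in X_2$, set $u=h^{-1}(x)$, and consider any $y$ in the companion set $S_\psi(x)=\{y\in X_2:d_2(\psi_1^{\,i}(x),\psi_1^{\,i}(y))\leq e \text{ for all } i\}$. Writing $v=h^{-1}(y)$ and using the intertwining, $\psi_1^{\,i}(x)=h(\phi_1^{\,i}(u))$ and $\psi_1^{\,i}(y)=h(\phi_1^{\,i}(v))$, so that $d_2\bigl(h(\phi_1^{\,i}(u)),h(\phi_1^{\,i}(v))\bigr)\leq e$ for all $i$.

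Applying the uniform-continuity estimate to $a=h(\phi_1^{\,i}(u))$ and $b=h(\phi_1^{\,i}(v))$ (whose $h^{-1}$-images are exactly $\phi_1^{\,i}(u)$ and $\phi_1^{\,i}(v)$) yields $d_1(\phi_1^{\,i}(u),\phi_1^{\,i}(v))\leq c$ for all $i$. Hence $v=h^{-1}(y)$ lies in the $\phi$-companion set $S_\phi(u)=\{w\in X_1:d_1(\phi_1^{\,i}(u),\phi_1^{\,i}(w))\leq c \text{ for all } i\}$. Thus the map $y\mapsto h^{-1}(y)$ sends $S_\psi(x)$ into $S_\phi(u)$, and since $h^{-1}$ is a bijection this map is injective. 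By n-expansiveness of $\phi_{1,\infty}$ the set $S_\phi(u)$ has at most $n$ elements, so $S_\psi(x)$ has at most $n$ elements as well; as $x$ was arbitrary, $\psi_{1,\infty}$ is n-expansive with constant $e$.

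The argument is essentially routine; the only points demanding care are bookkeeping rather than genuine obstacles. One is verifying that the conjugacy relation propagates through $\phi_1^{\,i}$ for the full index range used in the definition (in particular extending it to negative $i$ via the inverse system in the homeomorphism setting). The other is recognizing that it is precisely the uniform continuity of $h^{-1}$, not of $h$, that is needed to pull $e$-closeness in $X_2$ back to $c$-closeness in $X_1$; this is exactly where the uniform (as opposed to merely topological) conjugacy hypothesis is used, so a purely topological conjugacy would not suffice for this argument.
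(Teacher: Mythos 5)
Your proof is correct and follows essentially the same route as the paper: transfer the companion set through $h^{-1}$, using uniform continuity of $h^{-1}$ to convert the constant $e$ (the paper's $c'$) into the $\phi$-constant $c$, and conclude by n-expansiveness of $\phi_{1,\infty}$. If anything, your write-up is slightly more careful than the paper's, which asserts equality of the companion sets where only a containment (plus injectivity of $h^{-1}$, which you make explicit) is actually available or needed.
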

\begin{proof}
Let $c>0$ be an n-expansiveness constant for $\phi_{1,\infty}$. Since $\phi_{1,\infty}$ is uniformly conjugate to $\psi_{1,\infty}$, therefore there exists a uniform homeomorphism $h:X_1\rightarrow X_2$ such that $h\circ \phi_n=\psi_n\circ h$, for all $n>0$. Thus, $\phi_1^n\circ h^{-1}=h^{-1}\circ \psi_1^n$, for all $n>0$.
As $h^{-1}$ is uniformly continuous, therefore for every $c>0$, there is $c' >0$ such that for $x,y\in X_2$, $d_2(x,y)\leq c' \implies d_1(h^{-1}(x),h^{-1}(y))\leq c$.
\\For a fix $x\in X_2$, the set S = $\{y\in X_2:d_2(\psi_1^n(x),\psi_1^n(y))\leq c', \forall n>0\} =\{y\in X_2:d_1(h^{-1}(\psi_1^n(x)),h^{-1}(\psi_1^n(y)))\leq c', \forall n>0\} =\{y\in X_2:d_1(\phi_1^n(h^{-1}(x)),\phi_1^n(h^{-1}(y)))\leq c, \forall n>0\}$. 
\\Since $\phi_{1,\infty}$ is n-expansive with n-expansiveness constant $c>0$, therefore $S$ has atmost n elements and hence $\psi_{1,\infty}$ is n-expansive with n-expansiveness constant $c'>0$.
\end{proof}

Based on similar arguments, one can prove the following result.

\begin{thm}
If the non-autonomous system $\phi_{1,\infty}$ on $(X_1,d_1)$ is uniformly conjugate to a non-autonomous system $\psi_{1,\infty}$ on $(X_2,d_2)$ and $\phi_{1,\infty}$ is $\aleph_0$-expansive, then $\psi_{1,\infty}$ is also $\aleph_0$-expansive.
\end{thm}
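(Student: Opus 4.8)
The plan is to mirror the proof of Theorem 3.3 verbatim, replacing the cardinality condition ``at most $n$ elements'' by ``at most countable elements'' throughout, and to rely on the single additional observation that a bijection carries countable sets to countable sets exactly as it carries finite sets to finite sets. Since $\phi_{1,\infty}$ is uniformly conjugate to $\psi_{1,\infty}$, there is a uniform homeomorphism $h:X_1\rightarrow X_2$ with $h\circ\phi_n=\psi_n\circ h$ for all $n>0$, and iterating this intertwining relation yields $\phi_1^n\circ h^{-1}=h^{-1}\circ\psi_1^n$ for every $n>0$. This conjugacy identity is the structural input that lets us translate dynamical data on $X_2$ back to $X_1$.

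First I would fix an $\aleph_0$-expansiveness constant $c>0$ for $\phi_{1,\infty}$. Because $h^{-1}$ is uniformly continuous, for this $c$ there exists $c'>0$ such that $d_2(x,y)\leq c'$ implies $d_1(h^{-1}(x),h^{-1}(y))\leq c$ for all $x,y\in X_2$. I would then fix an arbitrary $x\in X_2$ and set $S=\{y\in X_2:d_2(\psi_1^n(x),\psi_1^n(y))\leq c',\ \forall n>0\}$, the candidate companion set whose cardinality we must control.

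The key step is to push $S$ through $h^{-1}$ into $X_1$. For any $y\in S$, uniform continuity of $h^{-1}$ gives $d_1\bigl(h^{-1}(\psi_1^n(x)),h^{-1}(\psi_1^n(y))\bigr)\leq c$ for all $n>0$, and the conjugacy identity rewrites the left-hand arguments as $\phi_1^n(h^{-1}(x))$ and $\phi_1^n(h^{-1}(y))$. Hence $h^{-1}(S)$ is contained in $\{z\in X_1:d_1(\phi_1^n(h^{-1}(x)),\phi_1^n(z))\leq c,\ \forall n>0\}$, which is at most countable by the $\aleph_0$-expansiveness of $\phi_{1,\infty}$. Since $h$ is a bijection, $S=h\bigl(h^{-1}(S)\bigr)$ is at most countable as well, and as $x$ was arbitrary this shows $\psi_{1,\infty}$ is $\aleph_0$-expansive with constant $c'$.

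I do not expect any genuine obstacle: the argument is structurally identical to the finite case, and the only conceptual point is that countability (like finiteness) is preserved under the bijection $h$. The one place to be slightly careful is to record that the inclusion $h^{-1}(S)\subseteq\{z:d_1(\phi_1^n(h^{-1}(x)),\phi_1^n(z))\leq c\}$ suffices --- we need only an upper bound on cardinality, so an inclusion rather than an equality is enough to conclude.
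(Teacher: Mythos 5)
Your proof is correct and takes essentially the same approach as the paper, which itself gives no separate argument for this theorem but states it follows ``based on similar arguments'' from its proof of the $n$-expansive case: fix the constant $c$, use uniform continuity of $h^{-1}$ to produce $c'$, transport the companion set through the conjugacy $\phi_1^n\circ h^{-1}=h^{-1}\circ\psi_1^n$, and apply the $\aleph_0$-expansiveness of $\phi_{1,\infty}$. Your two refinements --- treating the transported set as an inclusion rather than the paper's loose chain of set equalities, and explicitly noting that the bijection $h$ preserves countability --- only tighten the identical argument.
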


\begin{Cor}
If a non-autonomous system $\phi_{1,\infty}$ is n/$\aleph_0$-expansive on a compact metric space $X$, then so is the non-autonomous system $\psi_{1,\infty}$ = $\{h\circ \phi_n\circ h^{-1}\}_{n=1}^{\infty}$, where $h$ is a self homeomorphism of $X$. 
\end{Cor}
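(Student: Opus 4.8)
The plan is to recognize $\psi_{1,\infty}$ as a conjugate of $\phi_{1,\infty}$ and then invoke the two preceding theorems on the invariance of n- and $\aleph_0$-expansiveness under uniform conjugacy. First I would verify the conjugacy relation: from $\psi_n = h\circ\phi_n\circ h^{-1}$ one obtains directly $\psi_n\circ h = h\circ\phi_n$, that is, $h\circ\phi_n = \psi_n\circ h$ for every $n\geq 1$. This is precisely the relation defining $h$-conjugacy, so the self-homeomorphism $h\colon X\to X$ conjugates $\phi_{1,\infty}$ to $\psi_{1,\infty}$.

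Second, I would upgrade this topological conjugacy to a uniform one. This is the step where compactness of $X$ is essential: since $h$ is a homeomorphism of the compact metric space $X$, both $h$ and $h^{-1}$ are continuous maps on a compact space and are therefore uniformly continuous. Hence $h$ is a uniform homeomorphism, and consequently the conjugacy established above is in fact a uniform conjugacy.

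Finally, I would apply the two earlier preservation theorems. If $\phi_{1,\infty}$ is n-expansive, then by the theorem asserting that uniform conjugacy preserves n-expansiveness, $\psi_{1,\infty}$ is n-expansive; and if $\phi_{1,\infty}$ is $\aleph_0$-expansive, the analogous theorem yields that $\psi_{1,\infty}$ is $\aleph_0$-expansive. This covers both cases and completes the argument.

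The only genuine point to check, and the one I would flag as the crux, is the passage from ``conjugacy'' to ``uniform conjugacy,'' since the cited preservation theorems require the conjugating homeomorphism to be uniform. In general a topological conjugacy need not be uniform, but the compactness of $X$ makes $h$ automatically a uniform homeomorphism, so no additional hypothesis is needed. Everything else is a direct citation of the results just established, and there are no routine estimates to carry out.
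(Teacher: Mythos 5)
Your proof is correct and is exactly the argument the paper intends: the corollary is stated as an immediate consequence of the preceding uniform-conjugacy theorems, with the relation $h\circ\phi_n=\psi_n\circ h$ following from $\psi_n=h\circ\phi_n\circ h^{-1}$ and compactness of $X$ upgrading the conjugating homeomorphism $h$ to a uniform one. You correctly identified the only nontrivial point (why the conjugacy is uniform), which the paper leaves implicit.
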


\begin{thm}
Let $(X,f_{1,\infty})$ be a non-autonomous system, where $\phi_n:X\rightarrow X, n=1,2,\ldots$ is a sequence of equicontinuous maps. For any positive integer $k$, $\phi_{1,\infty}$ is n-expansive (respectively $\aleph_0$-expansive) if and only if $(\phi_{1,\infty})^k$ is n-expansive (respectively $\aleph_0$-expansive).
\end{thm}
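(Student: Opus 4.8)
The plan is to reduce everything to a comparison of two ``expansiveness sets'' by first recording the basic identity that the total maps of the iterate agree with the original total maps at multiples of $k$. Concretely, writing $h_{1,\infty}=(\phi_{1,\infty})^k=\{h_n\}_{n=1}^\infty$ and $h_1^m:=h_m\circ\cdots\circ h_1$, the definition of $h_n$ telescopes to $h_1^m=\phi_1^{mk}$ for every $m\ge 1$; thus the orbit of $(\phi_{1,\infty})^k$ through $x$ is exactly the subsequence $(\phi_1^{mk}(x))_m$ of the $\phi_{1,\infty}$-orbit. For a constant $c>0$ I set $\Gamma_c(x)=\{y\in X:d(\phi_1^i(x),\phi_1^i(y))\le c\text{ for all }i\}$ and $\Gamma_c^{(k)}(x)=\{y\in X:d(\phi_1^{mk}(x),\phi_1^{mk}(y))\le c\text{ for all }m\}$, so that $\phi_{1,\infty}$ is $n$-expansive (resp. $\aleph_0$-expansive) precisely when some $\Gamma_c(x)$ bound holds, and likewise for $(\phi_{1,\infty})^k$ with $\Gamma_c^{(k)}$. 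The whole statement then follows from suitable inclusions between these sets, and I would prove the $n$- and $\aleph_0$-cases simultaneously, since only the phrase ``at most $n$ elements''/``at most countably many elements'' changes.

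One implication is immediate and uses no equicontinuity. Since the conditions defining $\Gamma_c^{(k)}(x)$ form a subfamily (the indices $i=mk$) of those defining $\Gamma_c(x)$, we have $\Gamma_c(x)\subseteq\Gamma_c^{(k)}(x)$ for every $x$ and the \emph{same} constant $c$. Hence if $(\phi_{1,\infty})^k$ is $n$-expansive with constant $c$, every $\Gamma_c^{(k)}(x)$ has at most $n$ points, so every $\Gamma_c(x)$ does too, and $\phi_{1,\infty}$ is $n$-expansive with the same $c$ (identically for $\aleph_0$).

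For the converse I would produce a constant $c'>0$ with $\Gamma_{c'}^{(k)}(x)\subseteq\Gamma_c(x)$, where $c$ is an expansiveness constant for $\phi_{1,\infty}$. The mechanism is to ``fill in'' the intermediate indices: for any $i$ write $i=mk+j$ with $0\le j<k$, so that $\phi_1^i=\phi_{mk+1}^{mk+j}\circ\phi_1^{mk}$, where the block $\phi_{mk+1}^{mk+j}$ is a composition of at most $k-1$ of the maps $\phi_n$. Here equicontinuity enters through the following lemma I would first establish: reading equicontinuity as uniform equicontinuity of the family $\{\phi_n\}_n$ (automatic when $X$ is compact), and using that a composition of two uniformly equicontinuous families is again uniformly equicontinuous, the family $\mathcal{G}=\{\phi_{s+1}^{s+j}:s\ge 0,\ 0\le j\le k-1\}$ of all blocks of length $<k$ is uniformly equicontinuous. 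I then choose $c'\le c$ so that $d(a,b)\le c'$ forces $d(g(a),g(b))\le c$ for every $g\in\mathcal{G}$.

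Granting this, the converse finishes quickly: if $y\in\Gamma_{c'}^{(k)}(x)$ then $d(\phi_1^{mk}(x),\phi_1^{mk}(y))\le c'$ for all $m$, and applying the estimate with $g=\phi_{mk+1}^{mk+j}$, $a=\phi_1^{mk}(x)$, $b=\phi_1^{mk}(y)$ yields $d(\phi_1^i(x),\phi_1^i(y))\le c$ for every $i=mk+j$; thus $y\in\Gamma_c(x)$, so $|\Gamma_{c'}^{(k)}(x)|\le|\Gamma_c(x)|\le n$ and $(\phi_{1,\infty})^k$ is $n$-expansive. The main obstacle is exactly this equicontinuity step: the paper's definition of an equicontinuous system is phrased through the total maps $\phi_1^n$ anchored at time $1$, whereas the blocks $\phi_{mk+1}^{mk+j}$ start at the arbitrary time $mk$, so the argument really needs the time-independent (uniform) equicontinuity of $\{\phi_n\}_n$ together with the composition-closure lemma; without such a uniform hypothesis the intermediate estimate can break down. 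A smaller point to handle is the first block $1\le i<k$ (the case $m=0$), which requires $d(x,y)\le c'$: this is available if the index set is taken to include $i=0$ (so that $h_1^0=\mathrm{id}$ contributes the condition $d(x,y)\le c'$), and otherwise it is recovered by composing backwards, using that each $\phi_n$ is a homeomorphism with uniformly continuous inverse.
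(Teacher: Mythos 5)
Your proposal is correct and follows essentially the same route as the paper's own proof: the easy direction is the same-constant inclusion of companion sets (the conditions at multiples of $k$ form a subfamily of all conditions), and the hard direction uses equicontinuity of $\{\phi_n\}$ to make the length-less-than-$k$ blocks $\phi_{mk+1}^{mk+j}$ uniformly equicontinuous, producing a $c'$ with $\Gamma_{c'}^{(k)}(x)\subseteq\Gamma_{c}(x)$, exactly as the paper does with its constant $c'=\min\{c_j\}$. Your explicit treatment of the initial block $1\le i<k$ (the $m=0$ case) addresses a point the paper silently glosses over by in effect including the index $i=0$, so your version is, if anything, slightly more careful.
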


\begin{proof}
Let $c>0$ be an n-expansiveness constant for $\phi_{1,\infty}$. Since $\{\phi_n\}_{n=1}^\infty$ is an equicontinuous family of maps, for any $m\geq 0$ and $mk+1\leq j\leq (m+1)k$; $\phi_j$ is uniformly continuous on $X$ and therefore there exists $c_j>0$ such that \begin{align*}
d(x,y)\leq c_j\implies d(\phi_{mk+1}^j(x),\phi_{mk+1}^j(y))\leq c.
\end{align*} 
Since $\phi_n:X\rightarrow X,n=1,2,\ldots$ are equicontinuous maps, $c_j$ doesn't depend on m. Take $c'=min\{c_j:mk+1\leq j \leq (m+1)k\}$. So for any $m\geq 0$, $d(x,y)\leq c'\implies d(\phi_{mk+1}^j(x),\phi_{mk+1}^j(y))\leq c.$ Let $(\phi_{1,\infty})^k=\psi_{1,\infty}=\{\psi_n\}_{n=1}^{\infty}$, where $\psi_n=\phi_{(n-1)k+1}^{nk}$ and $\psi_1^n=\psi_n\circ \ldots \circ \psi_1$.
Note that $\phi_1^{nk}=\psi_1^n$. Thus for any $m\geq 0$ and $mk\leq j\leq (m+1)k$, $d(\psi_1^m(x),\psi_1^m(y))\leq c$ which implies $d(\phi_1^{mk}(x),\phi_1^{mk}(y))\leq c'$ and hence we get that $d(\phi_{mk+1}^j(\phi_1^{mk}(x)),\phi_{mk+1}^j(\phi_1^{mk}(y)))\leq c$ which implies $d(\phi_1^j(x),\phi_1^j(y))\leq c$. Since $c$ is an n-expansiveness constant for $\phi_{1,\infty}$, the set $\{y\in X:d(\phi_1^i(x),\phi_1^i(y))\leq c ;i>0\}$ has at most n elements. Therefore,$\{y\in X:d(\psi_1^i(x),\psi_1^i(y))\leq c' ;i> 0\}$ has at most n elements and hence $(\phi_{1,\infty})^k$ is n-expansive with constant of n-expansiveness $c'>0$.
\\Conversely, if $(\phi_{1,\infty})^k$ is n-expansive with constant of n-expansiveness $c>0$, then for any $x\in X$, the set $\{d(\psi_1^i(x),\psi_1^i(y))\leq c,i> 0\}$ has at most n elements, where $\psi_n=\phi_{(n-1)k+1}^{nk}$. Thus, the set $\{y\in X:d(\phi_1^{ik}(x),\phi_1^{ik}(y))\leq c,$ for all $i> 0\}$ = $\{d(\phi_1^j(x),\phi_1^j(y))\leq c,j>0\}$ has at most n elements. Hence, $(\phi_{1,\infty})$ is n-expansive with constant of n-expansiveness $c>0$. 
\end{proof}

\begin{thm}
The non-autonomous systems $(X,\phi_{1,\infty})$ on a compact metric space is n-expansive (respectively $\aleph_0$-expansive) if and only if $(\phi_{1,\infty})^{-1}$ is n-expansive (respectively $\aleph_0$-expansive), where $\phi_{1,\infty}$ is a family of self-homeomorphisms of $X$.
\end{thm}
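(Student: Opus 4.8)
The plan is to reduce the statement to the time-reversal symmetry that is already built into the two-sided definition of $n$-expansiveness for homeomorphisms, mimicking the autonomous case. Since each $\phi_n$ is a homeomorphism, the relevant set attached to $\phi_{1,\infty}$ at a point $x$ is
\[
\Gamma^{\phi}_c(x)=\{y\in X:\, d(\phi_1^i(x),\phi_1^i(y))\le c\ \text{for all}\ i\in\mathbb{Z}\},
\]
and by definition $\phi_{1,\infty}$ is $n$-expansive (resp. $\aleph_0$-expansive) exactly when there is a constant $c>0$ with $|\Gamma^{\phi}_c(x)|\le n$ (resp. $\Gamma^{\phi}_c(x)$ at most countable) for every $x$. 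First I would write down the analogous set $\Gamma^{\psi}_c(x)$ for the inverse system $\psi_{1,\infty}=(\phi_{1,\infty})^{-1}=\{\phi_n^{-1}\}_{n=1}^{\infty}$, and the aim is to show that, for a fixed $c$, these two families of sets carry exactly the same cardinality information.

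The key step is to compare the iterates of the two families. Here I would record how $\psi_1^m$ is assembled from the maps $\phi_n^{-1}$ and match the bi-infinite orbit segments $\{\phi_1^i(x)\}_{i\in\mathbb{Z}}$ with those produced by $\psi_{1,\infty}$. The essential observation is that the defining condition quantifies over the full index set $\mathbb{Z}$, which is invariant under $i\mapsto -i$: passing to the inverse family only reverses the direction in which the two-sided orbit of $x$ is traversed, without changing which points occur along it. Consequently the requirement that $d(\phi_1^i(x),\phi_1^i(y))\le c$ for all $i\in\mathbb{Z}$ is equivalent to the corresponding requirement for $\psi_{1,\infty}$, so that $\Gamma^{\psi}_c(x)=\Gamma^{\phi}_c(x)$ for every $x\in X$. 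Once this coincidence is in hand, the same constant $c$ serves as an $n$-expansiveness (resp. $\aleph_0$-expansiveness) constant for both systems, and the cardinality bound transfers verbatim.

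Because the construction is symmetric in $\phi_{1,\infty}$ and $\psi_{1,\infty}$, and $((\phi_{1,\infty})^{-1})^{-1}=\phi_{1,\infty}$, a single implication already yields the converse for free. The $n$-expansive and $\aleph_0$-expansive statements are then handled in one stroke, since the argument uses only the cardinality of the common set $\Gamma_c(x)$; replacing ``at most $n$ elements'' by ``at most countably many elements'' changes nothing, and I would simply remark on this rather than repeat the reasoning.

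I expect the main obstacle to be the bookkeeping in the comparison step. Because the system is non-autonomous, the iterate $\psi_1^m$ of the termwise-inverse family is a composite of the $\phi_n^{-1}$ taken in an order that need not literally equal $(\phi_1^m)^{-1}$, so one cannot invoke the autonomous identity $\psi_1^m=(\phi_1^m)^{-1}$ without justification. The delicate point is therefore to verify carefully that the collection of maps $\{\phi_1^i\}_{i\in\mathbb{Z}}$, together with the two-sided orbit it generates, is exactly the one tracked by $\psi_{1,\infty}$, so that the two constraint sets genuinely coincide (or at worst correspond under a cardinality-preserving bijection). On a compact $X$ no boundedness or uniformity issues intervene, so once this matching is pinned down the remainder is a direct transcription of the autonomous proof.
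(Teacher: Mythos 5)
Your overall route---exploit the $i\mapsto -i$ symmetry of the two-sided definition---is the same one the paper takes, but the step you explicitly defer (``verify carefully that the two constraint sets genuinely coincide'') is precisely the point at issue, and under the paper's own composition conventions it cannot be verified, because it is false. With $(\phi_{1,\infty})^{-1}=\{\phi_n^{-1}\}_{n=1}^{\infty}$ and the paper's rule for iterating a non-autonomous family, the iterates of the inverse system are $\psi_1^m=\phi_m^{-1}\circ\cdots\circ\phi_1^{-1}=(\phi_1\circ\phi_2\circ\cdots\circ\phi_m)^{-1}$, i.e.\ inverses of the \emph{reverse-order} compositions, whereas the maps occurring in $\Gamma^{\phi}_c(x)$ are the forward compositions $\phi_1^m=\phi_m\circ\cdots\circ\phi_1$ together with their inverses $(\phi_1^m)^{-1}=\phi_1^{-1}\circ\cdots\circ\phi_m^{-1}$. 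Already for $m=2$: $\psi_1^2=\phi_2^{-1}\circ\phi_1^{-1}$, while the $\phi$-family only contributes $\phi_2\circ\phi_1$ and $\phi_1^{-1}\circ\phi_2^{-1}$; unless $\phi_1$ and $\phi_2$ commute, $\psi_1^2$ is not among the maps $\{\phi_1^i\}_{i\in\mathbb{Z}}$, the two-sided $\psi$-orbit of $x$ is \emph{not} a re-traversal of the two-sided $\phi$-orbit, and there is no reason for $\Gamma^{\psi}_c(x)$ to equal, or even biject with, $\Gamma^{\phi}_c(x)$. So what you call ``bookkeeping'' is not bookkeeping: it is a genuine obstruction, and the proposal as written does not close.

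For comparison, the paper's proof sidesteps this silently: it writes the constraint set of $(\phi_{1,\infty})^{-1}$ as $\{y\in X: d((\phi_1^i)^{-1}(x),(\phi_1^i)^{-1}(y))\leq c,\ i\in \mathbb{Z}\}$, i.e.\ it treats the $i$-th iterate of the inverse system as $(\phi_1^i)^{-1}$ rather than as the forward composition of the maps $\phi_n^{-1}$. Under that reading the family of test maps is literally $\{\phi_1^{-i}\}_{i\in\mathbb{Z}}=\{\phi_1^{i}\}_{i\in\mathbb{Z}}$, the two constraint sets are the same set reindexed, and the theorem is immediate---essentially definitional; this is the tautological version of your argument. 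If instead one insists on the compositional meaning of $(\phi_{1,\infty})^{-1}$ (which is what the paper's own definition of the iterates of a family dictates, and what you correctly identify in your last paragraph), then your proposal and the paper's proof share the same gap, and it is not even clear that the statement remains true without extra hypotheses such as commutativity of the $\phi_n$. To complete your write-up you must either adopt the former convention explicitly, stating that the result is then immediate, or supply a genuinely new argument that handles the reverse-order compositions $(\phi_1\circ\cdots\circ\phi_m)^{-1}$.
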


\begin{proof}
Let $c>0$ be an n-expansive constant for $\phi_{1,\infty}$. For a fixed $x\in X$; the set \begin{align*}
\{y\in X: d(\phi_1^i(x),\phi_1^i(y))\leq c,i\in \mathbb{Z}\}
\end{align*} has at most n elements, i.e, the set
\begin{align*}
\{y\in X: d((\phi_1^i)^{-1}(x),(\phi_1^i)^{-1}(y))\leq c,i\in \mathbb{Z}\}
\end{align*} has at most n-elements. Thus, $(\phi_{1,\infty})^{-1}$ is n-expansive.
\\The converse can be proved similarly.
\end{proof}
Based on similar arguments, one can prove the result for $\aleph_0$-expansive non-autonomous systems.

\begin{Cor}
Let $\phi_{1,\infty}$ be a family of equicontinuous self-homeomorphisms of a compact metric space $(Y,d)$. Then $(\phi_{1,\infty})$ is n-expansive (respectively $\aleph_0$-expansive) if and only if $(\phi_{1,\infty})^{k}$ is n-expansive (respectively $\aleph_0$-expansive), for every $k\in \mathbb{Z}-\{0\}$.
\end{Cor}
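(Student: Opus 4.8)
The plan is to obtain this corollary by chaining the two preceding results, the iterate theorem (Theorem 3.5) and the inverse theorem (Theorem 3.6), after splitting into the cases $k>0$ and $k<0$. I would treat only the n-expansive statement in detail, since both theorems are already established in the ``(respectively $\aleph_0$-expansive)'' form, so the $\aleph_0$-expansive version follows by replacing ``at most $n$ elements'' with ``at most countably many elements'' at every step.

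For $k>0$ there is nothing to add: as $\phi_{1,\infty}$ is a family of equicontinuous maps, Theorem 3.5 applies directly with the positive integer $k$ and gives that $\phi_{1,\infty}$ is n-expansive if and only if $(\phi_{1,\infty})^k$ is n-expansive.

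For $k<0$ I would write $k=-m$ with $m=|k|$ a positive integer and read $(\phi_{1,\infty})^k$ as $\big((\phi_{1,\infty})^m\big)^{-1}$, the inverse of the $m$-th iterate. The argument then factors through the positive iterate $(\phi_{1,\infty})^m$. First, since $\phi_{1,\infty}$ is equicontinuous, Theorem 3.5 (applied with the positive integer $m$) gives that $\phi_{1,\infty}$ is n-expansive if and only if $(\phi_{1,\infty})^m$ is n-expansive. Second, each map of the system $(\phi_{1,\infty})^m$ is a finite composition of the homeomorphisms $\phi_n$ and is therefore itself a self-homeomorphism of the compact space $Y$; hence $(\phi_{1,\infty})^m$ meets the hypotheses of Theorem 3.6, which gives that $(\phi_{1,\infty})^m$ is n-expansive if and only if its inverse $\big((\phi_{1,\infty})^m\big)^{-1}=(\phi_{1,\infty})^k$ is n-expansive. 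Composing these two equivalences settles the case $k<0$.

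The only point requiring care, and the step I expect to be the main obstacle, is the bookkeeping in the negative case. I want to invoke Theorem 3.6 on the iterated system $(\phi_{1,\infty})^m$ rather than on $\phi_{1,\infty}$ itself, so that equicontinuity is needed solely where Theorem 3.5 is applied, while the weaker ``self-homeomorphism'' hypothesis of Theorem 3.6 is supplied automatically by the fact that compositions of homeomorphisms are homeomorphisms. Ordering the two invocations this way sidesteps the question of whether the inverse family or the iterated family of an equicontinuous system is again equicontinuous, which is exactly the kind of claim that need not hold for non-autonomous systems and which I would otherwise have to justify.
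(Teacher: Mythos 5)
Your proposal is correct and is exactly the argument the paper intends: the corollary is stated without proof immediately after Theorem 3.6, as the evident combination of Theorem 3.5 (positive iterates, using equicontinuity) and Theorem 3.6 (inverses of homeomorphisms on a compact space), which is precisely your chain. Your remark about applying Theorem 3.6 to the iterated system $(\phi_{1,\infty})^{m}$ rather than applying Theorem 3.5 to the inverse system---thereby avoiding any unjustified claim that the inverse family is equicontinuous---is a sound piece of bookkeeping that the paper leaves implicit.
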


\begin{thm}
If the non-autonomous system $(X,\phi_{1,\infty})$ is n-expansive (respectively $\aleph_0$-expansive) and $Y\subseteq X$ be an invariant subset of $X$, then the restriction of $(\phi_{1,\infty})$ on $Y$ defined by $\phi_{1,\infty}|_Y=\{\phi_n|_Y\}_{n=1}^{\infty}$ is also n-expansive (respectively $\aleph_0$-expansive). 
\end{thm}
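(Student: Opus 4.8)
The plan is to show that the very same n-expansiveness constant $c > 0$ that works for $(X, \phi_{1,\infty})$ also witnesses n-expansiveness for the restriction $\phi_{1,\infty}|_Y$, where $Y$ is equipped with the metric $d$ restricted to $Y$. The first step is to record the only place invariance enters: since $\phi_n(Y) \subseteq Y$ for each $n$, every map $\phi_n|_Y$ is a well-defined self-map of $Y$, and composing them gives $(\phi_n|_Y)\circ\cdots\circ(\phi_1|_Y) = (\phi_1^n)|_Y$. In other words, the $n$-th iterate of the restricted system is exactly the restriction of $\phi_1^n$ to $Y$, so that for any $x \in Y$ the point $\phi_1^i(x)$ lies in $Y$ and the orbit of $x$ under $\phi_{1,\infty}|_Y$ is literally its orbit under $\phi_{1,\infty}$.

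Next I would fix an arbitrary $x \in Y$ and compare the relevant companion sets. Writing $S_Y(x) = \{y \in Y : d(\phi_1^i(x), \phi_1^i(y)) \leq c \text{ for all } i \in \mathbb{N}\}$ for the set computed inside $Y$, the observation above shows that the defining inequality for $S_Y(x)$ is the identical inequality defining the ambient set $S_X(x) = \{y \in X : d(\phi_1^i(x), \phi_1^i(y)) \leq c \text{ for all } i \in \mathbb{N}\}$. Hence $S_Y(x) = S_X(x) \cap Y \subseteq S_X(x)$. By n-expansiveness of $(X, \phi_{1,\infty})$ the set $S_X(x)$ has at most $n$ elements, so its subset $S_Y(x)$ has at most $n$ elements too. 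Since $x \in Y$ was arbitrary, $\phi_{1,\infty}|_Y$ is n-expansive with the same constant $c$.

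For the $\aleph_0$-expansive case I would run the identical argument: starting from an $\aleph_0$-expansiveness constant $c$, the same inclusion $S_Y(x) \subseteq S_X(x)$ holds, and since $S_X(x)$ is at most countable, any subset of it, in particular $S_Y(x)$, is at most countable as well.

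There is essentially no hard step here; the one thing to get right is the bookkeeping in the first paragraph, namely that invariance guarantees the iterates of the restricted system agree with the restrictions of the iterates of $\phi_{1,\infty}$, so that the inequality controlling the companion set is unchanged upon passing to $Y$. Everything else reduces to the elementary fact that a subset of a set with at most $n$ (respectively at most countably many) elements inherits that cardinality bound, which is why the same constant transfers verbatim.
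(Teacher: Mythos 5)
Your proposal is correct and takes essentially the same route as the paper's own proof: fix $x\in Y$, use the same constant $c$, and observe that the companion set computed in $Y$ is contained in the companion set computed in $X$, so the cardinality bound is inherited. If anything, your bookkeeping in the first paragraph---that invariance makes each $\phi_n|_Y$ a self-map of $Y$ and forces $(\phi_1^n)|_Y$ to be the $n$-th iterate of the restricted system---is more explicit than the paper's, which invokes invariance only in passing to justify the inclusion $S_1\subseteq S$.
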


\begin{proof}
Let $c>0$ be a $n-expansive$ constant for $(X,\phi_{1,\infty})$. Fix some $x\in Y\subseteq X$, then the set $S=\{y\in X:d(\phi_1^i(x),\phi_1^i(y))\leq c ,i>0\}$ has at most n elements. Since $Y$ is invariant under $\phi_{1,\infty}$, we have $\phi_1^i(Y)\subseteq Y$, for all $i> 0$ and thus, the set $S_1=\{y\in Y:d(\phi_1^i(x),\phi_1^i(y))\leq c ; i>0\} \subseteq  S$. Therefore, $\phi_{1,\infty}|_Y$ is also n-expansive with n-expansiveness constant $c>0$.
\end{proof}
Similarly, one can prove the result for $\aleph_0$-expansiveness.

Let $(X_1,d_1)$ and $(X_2,d_2)$ be two metric spaces and $\phi_{1,\infty}$ and $\psi_{1,\infty}$ be a sequence of continuous maps on $X_1$ and $X_2$ respectively . The metric $d$ on the product $X_1\times X_2$ is defined by 
\begin{align*}
d((x_1,x_2),(y_1,y_2))= max \{d_1(x_1,y_1),d_2(x_2,y_2)\}, (x_1,x_2), (y_1,y_2)\in  X_1\times X_2.
\end{align*}

\begin{thm}

The non-autonomous system $(X_1,\phi_{1,\infty})\times (X_2,\psi_{1,\infty})=(X_1\times X_2,\{\phi_n\times \psi_n\}_{n=1}^{\infty})$ is n-expansive (respectively $\aleph_0$-expansive) on $X_1\times X_2$, whenever $(X_1,\phi_{1,\infty})$ and $(X_2,\psi_{1,\infty})$ are both n-expansive (respectively $\aleph_0$-expansive). Hence every finite direct product of n-expansive (respectively $\aleph_0$-expansive) non-autonomous systems is n-expansive (respectively $\aleph_0$-expansive).

\end{thm}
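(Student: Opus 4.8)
The plan is to reduce everything to the observation that iterating a product map yields the product of the iterates. Writing $\Phi_n=\phi_n\times\psi_n$ for the maps of the product system, one checks directly from the definition of $\Phi_1^i=\Phi_i\circ\cdots\circ\Phi_1$ that $\Phi_1^i=\phi_1^i\times\psi_1^i$ for every $i$, because a composition of coordinatewise maps acts coordinatewise. Feeding this into the max metric on $X_1\times X_2$ gives the identity
\begin{align*}
d\big(\Phi_1^i(x_1,x_2),\Phi_1^i(y_1,y_2)\big)=\max\{d_1(\phi_1^i(x_1),\phi_1^i(y_1)),\,d_2(\psi_1^i(x_2),\psi_1^i(y_2))\},
\end{align*}
which is essentially the only computation the argument needs.

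Next I would fix expansiveness constants $c_1,c_2$ for the two factors and set $c=\min\{c_1,c_2\}$. For a fixed point $(x_1,x_2)$, let $S=\{(y_1,y_2):d(\Phi_1^i(x_1,x_2),\Phi_1^i(y_1,y_2))\le c\ \text{for all } i\}$ be its orbit-tracking set. The displayed identity shows that $(y_1,y_2)\in S$ precisely when $d_1(\phi_1^i(x_1),\phi_1^i(y_1))\le c$ and $d_2(\psi_1^i(x_2),\psi_1^i(y_2))\le c$ hold simultaneously for all $i$, so $S$ factors as a Cartesian product $S=S_1\times S_2$, where $S_j$ is the corresponding tracking set of $x_j$ of radius $c$ in the $j$-th factor. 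Since $c\le c_j$, each $S_j$ is contained in the radius-$c_j$ tracking set and hence has at most $n$ elements (respectively, is at most countable).

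For the $\aleph_0$-case this finishes the proof at once: a product of two countable sets is countable, so $S=S_1\times S_2$ is countable, the product is $\aleph_0$-expansive with constant $c$, and the finite-product statement follows by induction on the number of factors. The main obstacle lies in the finite-$n$ case, and I expect it to be genuine rather than technical: the factorization only delivers $|S|=|S_1|\cdot|S_2|\le n^2$, not $\le n$. Indeed, taking two copies of the same $n$-expansive system whose tracking sets are honestly $n$-point sets produces an $n^2$-point set in the product, so the argument cannot yield better than $n^2$-expansiveness in general (for $n=1$ the two bounds coincide and ordinary expansiveness is preserved, which is consistent). I would therefore flag the plain $n$-expansive assertion as the delicate point: the clean conclusion supported by this method is $n^2$-expansiveness, and recovering exactly $n$ would require additional hypotheses on the factors.
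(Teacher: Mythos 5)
Your proposal follows essentially the same reduction as the paper --- set $c=\min\{c_1,c_2\}$, use the max metric to split the tracking condition coordinatewise, and observe that the tracking set of $(x_1,x_2)$ factors as $S=S_1\times S_2$ --- and your treatment of the $\aleph_0$-case is correct and matches the paper's (a product of two countable sets is countable, then induct on the number of factors). The substantive difference is in the $n$-expansive case, and there you have not found a gap in your own argument but a genuine flaw in the paper's. The paper argues by contradiction: assuming $S$ has more than $n$ elements, it asserts that \emph{both} coordinate tracking sets $S_1$ and $S_2$ have more than $n$ elements, contradicting $n$-expansiveness of the factors. That inference is a non sequitur: from $S=S_1\times S_2$ and $|S|>n$ one only gets $|S_1|\cdot|S_2|>n$; for instance $|S_1|=n$ and $|S_2|=2$ give $|S|=2n>n$ with neither factor exceeding $n$. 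The inference is valid only when $n=1$, i.e.\ for ordinary expansiveness.

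Your further diagnosis is also sound: the correct conclusion of this method is $n^2$-expansiveness of the product, and the theorem's plain $n$-expansive assertion is in fact false for $n\geq 2$. Take an autonomous system $(X,f)$ that is $n$-expansive but not $(n-1)$-expansive (such examples exist in the literature the paper itself cites, namely Morales and Li--Zhang, and an autonomous system is a non-autonomous one with constant sequence of maps). Not being $(n-1)$-expansive means that for every $c>0$ there is a point $x$ whose $c$-tracking set has at least $n$ elements; then the $c$-tracking set of $(x,x)$ in $(X\times X, f\times f)$ is exactly $S_c(x)\times S_c(x)$, of cardinality at least $n^2>n$. Since this holds for every $c>0$, the product is not $n$-expansive. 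So your proposal proves everything that is actually true here ($\aleph_0$-expansiveness of the product, and $n^2$-expansiveness in the finite case), and the delicate point you flagged is precisely where the paper's proof breaks down.
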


\begin{proof}
Let $c_1$ and $c_2$ be n-expansiveness constants for $\phi_{1,\infty}$ and $\psi_{1,\infty}$ respectively and choose $c=min\{c_1, c_2\}$. For a fix $(x_1,x_2)\in X_1\times X_2$, suppose that the set $S=\{(x_1',x_2'):d((\phi_1^i\times \psi_1^i)(x_1,x_2)),(\phi_1^i\times \psi_1^i)(x_1',x_2')))\leq c;$ for all $i>0\}$ has more than n elements. Then the set $\{(x_1',x_2'):d((\phi_1^i(x_1),\psi_1^i(x_2)),(\phi_1^i(x_1'),\psi_1^i(x_2')))\leq c,$ for all $i>0\}$ has more than n elements which implies that the set $\{(x_1',x_2'):$max$\{d_1(\phi_1^i(x_1),\phi_1^i(x_2')),d_2(\psi_1^i(x_2),\psi_1^i(x_2'))\}\leq c,$ for all $i>0\}$ has more than n elements. Therefore, the sets $\{x_1':d(\phi_1^i(x_1),\phi_1^i(x_1'))\leq c\leq c_1\}$ and $\{x_2':d(\psi_1^i(x_2),\psi_1^i(x_2'))\leq c\leq c_2\}$ have more than n elements for all $i>0$, which contradicts the n-expansiveness of $\phi_{1,\infty}$ and $\psi_{1,\infty}$. Thus, the set $S$ can have at most n elements and hence $\phi_{1,\infty}\times \psi_{1,\infty}$ is n-$expansive$.
\\The proof follows similarly for $\aleph_0$-expansiveness.
\end{proof}

\begin{thm}
Let $(X,d)$ be a compact metric space and $\phi_n:X\rightarrow X$, $n=1,2,\ldots$ be continuous maps.We have the following:
\begin{itemize}
\item[i.] If $(X,\phi_{1,\infty})$ is n-expansive, then the set Fix$(X,\phi_{1,\infty})$ is finite and hence the set Per$(X,\phi_{1,\infty})$ is countable. 
\item[ii.] If $(X,\phi_{1,\infty})$ is $\aleph_0$-expansive, then the set Fix$(X,\phi_{1,\infty})$ is countable and hence the set Per$(X,\phi_{1,\infty})$ is countable. 
\item[iii.] If $(X,\phi_{1,\infty})$ is $\aleph_0$-expansive, then the set $A(X,\phi_{1,\infty})$ is countable.
\end{itemize} 
\end{thm}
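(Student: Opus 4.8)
The plan is to prove each of the three parts by exhibiting the relevant set as a subset of a single ``companion set'' of the form $\Gamma_c(x)=\{y\in X:d(\phi_1^i(x),\phi_1^i(y))\le c,\ i\in\mathbb{N}\}$, whose cardinality is controlled directly by the definition of $n$-expansiveness or $\aleph_0$-expansiveness. The whole argument is essentially a bookkeeping exercise: the hard content is already packaged into the hypotheses, so each part reduces to checking that fixed points, periodic points, or points with converging semi-orbits all lie inside one such set for a suitable choice of base point $x$.

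For part (i), I would first observe that every fixed point $p\in\mathrm{Fix}(X,\phi_{1,\infty})$ satisfies $\phi_1^i(p)=p$ for all $i$. Fix any one fixed point $p_0$ (if $\mathrm{Fix}$ is empty there is nothing to prove). Then for any other fixed point $p$ we have $d(\phi_1^i(p_0),\phi_1^i(p))=d(p_0,p)$ for all $i$; but this is not automatically $\le c$, so the cleaner route is to note that each fixed point $p$ belongs to its \emph{own} companion set and then argue via a finite subcover. More efficiently: take any $p\in\mathrm{Fix}$ and consider $\Gamma_c(p)$. Any fixed point $q$ with $d(p,q)\le c$ satisfies $d(\phi_1^i(p),\phi_1^i(q))=d(p,q)\le c$ for all $i$, so $q\in\Gamma_c(p)$, which has at most $n$ elements. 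Thus each point of $\mathrm{Fix}$ has a $c$-ball containing at most $n$ fixed points; by compactness $\mathrm{Fix}$ is covered by finitely many such balls, so $\mathrm{Fix}$ is finite. For the periodic part, I would use Theorem~1.5 (invariance under passing to the $k$th iterate): a periodic point $p$ with $\phi_1^{nk}(p)=p$ is a fixed point of the $n$-expansive system $(\phi_{1,\infty})^k$, whose fixed-point set is finite for each $k$; since $\mathrm{Per}(X,\phi_{1,\infty})=\bigcup_{k}\mathrm{Fix}(X,(\phi_{1,\infty})^k)$ is a countable union of finite sets, it is countable.

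For part (ii), the argument is identical with ``finite'' replaced by ``countable'': the companion set $\Gamma_c(p)$ is now countable, the $c$-ball around each fixed point contains countably many fixed points, compactness gives a countable (in fact finite) subcover, and a countable union of countable sets is countable; the periodic case again follows by the iterate theorem and a countable union. For part (iii), I would handle the points with converging semi-orbits in $A(X,\phi_{1,\infty})$ by exploiting that $\omega$- and $\alpha$-limit sets are singletons: if $y$ has converging semi-orbits, its forward and backward orbit both accumulate at single points. The key step is to show that two points $y,z$ sharing the same $\omega$-limit and $\alpha$-limit singletons are eventually within $c$ of each other in both time directions, so that (after replacing $y$ by a suitable point on its orbit, or passing to an iterate) $z\in\Gamma_c(y)$; grouping $A$ by the pair $(\alpha$-limit$,\omega$-limit$)$ and bounding each group by countability of $\Gamma_c$, together with separability of the compact metric space giving countably many such pairs, yields countability of $A$.

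The main obstacle is part (iii). For parts (i) and (ii) the containment $q\in\Gamma_c(p)$ is immediate because fixed and periodic points have genuinely constant (or periodic) orbit-distances, but for converging semi-orbits the distance $d(\phi_1^i(y),\phi_1^i(z))$ is only small for $|i|$ large, not for all $i$, so $z$ need not literally lie in $\Gamma_c(y)$. The delicate work is to convert ``eventually $c$-close in both directions'' into membership in a countably-bounded companion set --- this likely requires grouping points by their limit singletons and then arguing that within each group the remaining freedom is controlled, possibly invoking equicontinuity or a shift-of-index trick so that the finitely many ``bad'' middle coordinates do not spoil the count. I would expect the cleanest formulation to first reduce to showing each fibre over a fixed pair of limit points is countable, which is where the $\aleph_0$-expansiveness hypothesis does its real work.
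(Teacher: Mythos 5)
Your parts (i) and (ii) are essentially correct and differ only cosmetically from the paper: where the paper derives a contradiction from a limit point of $\mathrm{Fix}(X,\phi_{1,\infty})$ (using compactness of $X$), you cover $\mathrm{Fix}$ by $c$-balls, each of which lies in a companion set $\Gamma_c(p)$ and hence contains at most $n$ (resp.\ countably many) fixed points; to extract the finite subcover you should note that $\mathrm{Fix}$, being the intersection of the closed sets $\{p:\phi_1^m(p)=p\}$, is closed in the compact space $X$, hence compact. Your reduction of $\mathrm{Per}$ to $\bigcup_k \mathrm{Fix}\bigl(X,(\phi_{1,\infty})^k\bigr)$ via the iterate theorem is exactly the paper's step (with the same caveat, inherited from the paper, that the iterate theorem is stated under equicontinuity).

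Part (iii), however, has a genuine gap, located exactly where you sense the difficulty. You propose to group $A(X,\phi_{1,\infty})$ by the pair $(\alpha\text{-limit},\omega\text{-limit})$ and assert that \emph{separability} of $X$ gives countably many such pairs. That is false: separability bounds the size of a dense subset, not the number of values the limit singletons can take; a priori every point of $X$ can occur as an $\omega$-limit, so your decomposition may have uncountably many classes and the pigeonhole argument never starts. The missing idea --- and the paper's key step --- is that the limit singletons are \emph{fixed points}, so that part (ii) is what makes the index set countable: the paper enumerates $\mathrm{Fix}$ as $x_1,x_2,\ldots$ and writes $A(X,\phi_{1,\infty})\subseteq\bigcup_{i,j,k}A(i,j,k)$, where $A(i,j,k)$ consists of points whose semi-orbits are within $c/2$ of $x_i$ and $x_j$ from time $k$ on; this is precisely why (iii) is proved after, and from, (ii). Your second worry, the uncontrolled ``middle coordinates,'' is also resolved differently from what you suggest: no equicontinuity and no shift-of-index trick is needed. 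Once some $A(i_0,j_0,k_0)$ is uncountable, the paper covers it by \emph{finitely} many balls on which the finitely many maps $\phi_1^m$, $m\le k_0$, distort by at most $c/2$ (uniform continuity of finitely many continuous maps on a compact space); one of these balls contains uncountably many points of $A(i_0,j_0,k_0)$, and any two such points then stay within $c$ of each other at \emph{all} times --- after time $k_0$ because both orbits hug the same fixed points $x_{i_0},x_{j_0}$, before time $k_0$ because of the ball --- contradicting $\aleph_0$-expansiveness. Without these two ingredients (fixed points as limits, then the double pigeonhole via compactness) your outline does not close.
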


\begin{proof} 
Let $(X,d)$ be a compact metric space and $\phi_n:X\rightarrow X$, $n=1,2,\ldots$ be continuous maps.
\begin{itemize}

\item[i.]Let $(X,\phi_{1,\infty})$ be n-expansive with constant of n-expansiveness $c>0$. Suppose Fix$(X,\phi_{1,\infty})$ is infinite, then $X$ being compact, Fix$(X,\phi_{1,\infty})$ must have a limit point, say $p\in X$. Let $B_d(p,c/2)$ denote the open ball centred at p of radius $c/2$. It is easy to note that the set $S=$ Fix$(X,\phi_{1,\infty})\cap B_d(p,c/2)$ is infinite. For some fixed $x\in S$, let $S(x)$ denote the set $S(x)=\{y\in S:d(\phi_1^i(x),\phi_1^i(y))=d(x,y)\leq c,$ for all $i> 0\}$. Since $S$ is infinite, we get that $S(x)$ is infinite which contradicts the n-expansiveness of $(X,\phi_{1,\infty})$. Thus, we have $S$ is finite and hence Fix$(X,\phi_{1,\infty})$ is finite.
\\As the set of periodic points, Per$(X,\phi_{1,\infty})$, is the union of of fixed points of $(\phi_{1,\infty})^k$, for all $k\geq 0$, we have Per$(X,\phi_{1,\infty})$ is countable.

\item[ii.] The proof follows similarly as part [i].
\item[iii.] Let $(X,\phi_{1,\infty})$ be $\aleph_0$-expansive with $\aleph_0$-expansiveness constant $c>0$. By[ii], we have that Fix$(X,\phi_{1,\infty})$ is countable. Enumerate Fix$(X,\phi_{1,\infty})$ as $x_1,x_2,\ldots$ and let if possible $A(X,\phi_{1,\infty})$ be uncountable. Denote by $A(i,j,k)$ the set $\{x\in A(X,\phi_{1,\infty}):d(\phi_1^n(x),x_i)\leq c/2$ and $d(\phi_1^n(x),x_j)\leq c/2,$ for all $n\geq k\}$. Clearly, $A(X,\phi_{1,\infty})\subseteq  \bigcup\limits_{i,j,k\in \mathbb{N}}A(i,j,k)$. As $A(X,\phi_{1,\infty})$ is uncountable, there exist $i_0, j_0,k_0$ such that $A(i_0,j_0,k_0)$ is uncountable. $X$ being compact, we have $A(i_0,j_0,k_0)\subseteq  \bigcup\limits_{m=1}^{t}B_{d(\epsilon_m)}(x_m)$, where $B_{d(\epsilon_m)}(x_m)$ is an open ball of radius $\epsilon_m$ centred at $x_m$, such that $B_{d(\epsilon_m)}(x_m)=\{y\in X:$ if $ d(x_m,y)\leq \epsilon_m, $ then $d(\phi_1^n(x_m),\phi_1^n(y))\leq c/2,$ for all $ n\leq k_o \}$. So, we get $1\leq m_0\leq t$ such that $B_{d(\epsilon_{m_0})}(x_{m_0})$ is uncountable. Therefore, for all $y_i\neq y_j\in B_{d(\epsilon_{m_0})}(x_{m_0})$, $d(\phi_1^n(y_i),\phi_1^n(y_j))\leq c,$ for all $n\in \mathbb{N}$ which contradicts the $\aleph_0$-expansiveness of $(X,\phi_{1,\infty})$. Thus, the set $A(X,\phi_{1,\infty})$ is countable.
\end{itemize}

\end{proof}

In the next example, we show that if $X$ is non-compact, then the set of periodic points for an n-expansive or $\aleph_0$-expansive need not be countable.

\begin{exm} \end{exm} 
Let $X=\mathbb{R}$ and $g, h$ on $X$ be defined by: 
\[ g(x) =
2x, \hspace{2mm} \text{for} \ x \in \mathbb{R}  
\]
\[ h(x) =
\frac{1}{2} x, \hspace{2mm} \text{for} \ x \in \mathbb{R}  
\]

Let $(X,\phi_{1,\infty})$ be non-autonomous system such that $\phi_{1,\infty}= \{g,h,g^2,h^2,g^3,h^3,\ldots \ldots\}$ for all $n> 0$. Clearly, $(X,\phi_{1,\infty})$ is both n-expansive and $\aleph_0$-expansive. Now, for every $x\in X$, $\phi_1^{2k}(x)=x$ for all $k\geq 1$. Therefore, every point in $X$ is a periodic for $(X,\phi_{1,\infty})$ and hence the $Per(X,\phi_{1,\infty})=\mathbb{R}$ which is uncountable.

In the following definition we extend the notion of continuum-wise expansiveness \cite{MR1222517} for non-autonomous systems. 

\begin{defn}
The non-autonomous system $(X,\phi_{1,\infty})$ or $\phi_{1,\infty}$ is said to be $\it{continuum wise(cw)}$-$\it{expansive}$ if there exists a $c>0$ such that every non-degenerate subcontinuum $C$ of $X$ satisfies $diam(f_1^n(C))>c$ for some $n\in \mathbb{N}$.
\end{defn}

Similarly, a sequence of homeomorphisms $\phi_n:X\rightarrow X$ is said to be $\it{continuum wise(cw)}$-$\it{expansive}$ if there exists a $c>0$ such that every non-degenerate subcontinuum $C$ of $X$ satisfies $diam(f_1^n(C))>c$ for some $n\in \mathbb{Z}$.

\begin{thm}
Let $(X,d)$ be a metric space and $\phi_n:X\rightarrow X, n=1,2,\ldots$ be a sequence of continuous maps. If $(X,\phi_{1,\infty})$ is $\aleph_0$-expansive, then it is cw-expansive.
\end{thm}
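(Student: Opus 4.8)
The plan is to argue by contradiction, reusing the $\aleph_0$-expansiveness constant directly as the cw-expansiveness constant. Let $c>0$ be an $\aleph_0$-expansiveness constant for $(X,\phi_{1,\infty})$, so that for every $x\in X$ the set $\Gamma_c(x)=\{y\in X : d(\phi_1^i(x),\phi_1^i(y))\leq c \text{ for all } i\in\mathbb{N}\}$ is at most countable. Suppose, towards a contradiction, that $(X,\phi_{1,\infty})$ fails to be cw-expansive with this same constant $c$. Then there exists a non-degenerate subcontinuum $C\subseteq X$ such that $\mathrm{diam}(\phi_1^n(C))\leq c$ for all $n\in\mathbb{N}$.

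First I would translate this diameter condition into membership in an $\aleph_0$-expansiveness set. Fix any point $x\in C$. For every $y\in C$ and every $n\in\mathbb{N}$, both $\phi_1^n(x)$ and $\phi_1^n(y)$ lie in $\phi_1^n(C)$, so $d(\phi_1^n(x),\phi_1^n(y))\leq \mathrm{diam}(\phi_1^n(C))\leq c$. Hence $C\subseteq \Gamma_c(x)$, and since $\Gamma_c(x)$ is at most countable, $C$ would have to be at most countable as well.

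The crux of the argument is then to show that a non-degenerate subcontinuum is necessarily uncountable, which contradicts the previous step. For this I would exploit the connectedness of $C$: with $x\in C$ fixed, consider the continuous map $f:C\to\mathbb{R}$ given by $f(y)=d(x,y)$. Since $C$ is connected, $f(C)$ is an interval in $\mathbb{R}$; since $C$ is non-degenerate there is some $z\in C$ with $f(z)=d(x,z)>0$, so $f(C)\supseteq[0,d(x,z)]$, which is uncountable. A continuous image of $C$ cannot be uncountable unless $C$ itself is uncountable, so $C$ is uncountable. This contradiction shows that no such $C$ exists, and therefore every non-degenerate subcontinuum $C$ satisfies $\mathrm{diam}(\phi_1^n(C))>c$ for some $n\in\mathbb{N}$; that is, $(X,\phi_{1,\infty})$ is cw-expansive with constant $c$.

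The only genuine obstacle is the topological fact that a non-degenerate continuum is uncountable; once that is in hand, the remainder is a direct unwinding of the two definitions. I note that the argument uses neither compactness of $X$ nor invertibility of the maps $\phi_n$, so it applies verbatim in the one-sided continuous setting (indices $i\in\mathbb{N}$) exactly as the statement requires.
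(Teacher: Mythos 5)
Your proof is correct and follows essentially the same route as the paper: both arguments rest on the fact that a non-degenerate subcontinuum is uncountable and therefore cannot be contained in the at most countable companion set $\Gamma_c(x)$ of any of its points, forcing $\mathrm{diam}(\phi_1^m(C))>c$ for some $m$. The only differences are cosmetic: you phrase it as a contradiction rather than directly, and you supply a proof (via the map $y\mapsto d(x,y)$) of the uncountability of non-degenerate continua, which the paper simply asserts.
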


\begin{proof}
Let $\phi_{1,\infty}$ be $\aleph_0$-expansive with constant of $\aleph_0$ expansiveness $c>0$ and $C$ be any non-degenerated subcontinuum of $X$. Thus, $C$ contains uncountable elements. Fix some $x\in X$ and let $S=\{x'\in X:d(\phi_1^i(x),\phi_1^i(x'))\leq c;i>0\}$. Since $f_{1,\infty}$ is $\aleph_0$-expansive, therefore $S$ has at most countable elements and hence there exists some $y\in C\setminus S$ which implies $d(\phi_1^m(x),\phi_1^m(y))> c$ for some $m>0$, and thus diam$(\phi_1^m(C))> c$. Thus, $\phi_{1,\infty}$ is cw-expansive with constant of cw-expansiveness $c>0$. 
\end{proof}

\begin{thm}
Let $(X,d)$ be an uncountable Lindel{\"o}f metric space and $\phi_n:X\rightarrow X, n=1,2,\ldots$ be a sequence of equicontinuous maps. Then $(X,\phi_{1,\infty})$ is never $\aleph_0$-expansive.
\end{thm}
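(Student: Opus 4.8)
The plan is to suppose, for contradiction, that $(X,\phi_{1,\infty})$ is $\aleph_0$-expansive and then to use equicontinuity together with the Lindel\"of property to force $X$ to be countable, contradicting the hypothesis. The key observation is that equicontinuity converts each $\aleph_0$-expansive ``dynamical ball'' into an honest open metric neighbourhood of its centre, after which a covering argument does the rest.

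So I would suppose $\phi_{1,\infty}$ is $\aleph_0$-expansive with constant $c>0$, and for each $x\in X$ set $S_c(x)=\{y\in X: d(\phi_1^i(x),\phi_1^i(y))\le c \text{ for all } i>0\}$, which is countable by assumption. Fix $x\in X$ and apply equicontinuity at $x$ with $\epsilon=c$: there is $\delta_x>0$ such that $d(x,y)<\delta_x$ implies $d(\phi_1^n(x),\phi_1^n(y))<c$ for all $n>0$. Consequently the open ball $B(x,\delta_x)$ satisfies $B(x,\delta_x)\subseteq S_c(x)$, and is therefore countable.

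Next I would invoke the Lindel\"of property. The family $\{B(x,\delta_x):x\in X\}$ is an open cover of $X$, so it admits a countable subcover $\{B(x_k,\delta_{x_k})\}_{k\in\mathbb{N}}$. Each ball in this subcover is countable by the previous step, so $X=\bigcup_{k\in\mathbb{N}}B(x_k,\delta_{x_k})$ is a countable union of countable sets, hence countable. This contradicts the uncountability of $X$, and the theorem follows.

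The argument is essentially routine once the containment $B(x,\delta_x)\subseteq S_c(x)$ is in place; the only point deserving attention is the direction of the inequality, since equicontinuity supplies the strict bound $<c$ whereas $S_c(x)$ is defined with $\le c$, but as the strict bound is the stronger one the containment is valid. I anticipate no genuine obstacle here: the heart of the matter is simply that pointwise equicontinuity promotes each expansive ball to an open set, and then the Lindel\"of property together with uncountability closes the argument. It is also worth noting that the hypotheses are used economically, so one could relax ``equicontinuous'' to ``equicontinuous at each point'' (which is indeed how the definition in the preliminaries reads) without affecting the proof.
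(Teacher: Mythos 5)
Your proof is correct and follows essentially the same route as the paper's: equicontinuity forces each ball $B(x,\delta_x)$ into the countable set $S_c(x)$, and the Lindel\"of property then exhibits $X$ as a countable union of countable sets, contradicting uncountability. If anything, your version is slightly more careful than the paper's, which uses a single radius $c'$ for all centres (implicitly assuming uniform equicontinuity) whereas your point-dependent $\delta_x$ matches the pointwise definition of equicontinuity given in the preliminaries.
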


\begin{proof}
Let if possible, $(X,\phi_{1,\infty})$ be $\aleph_0$- expansive with constant of $\aleph_0$-expansiveness $c>0$. Since $\phi_{1,\infty}$ is an equicontinuous system, we can get a $c'>0$ corresponding to $c$ such that $d(\phi_1^i(x),\phi_1^i(y))\leq c$ whenever $d(x,y)\leq c'$ for all $i\in \mathbb{N}$, which implies $B_d(x,c')\subseteq  S_c(x)$, where $S_c(x)=\{y\in X: d(\phi_1^i(x),\phi_1^i(y))\leq c$ for all $i\in \mathbb{Z}\}$. By $\aleph_0$-expansiveness of $(X,\phi_{1,\infty})$, $B_d(x,c')$ is at most countable for every $x\in X$ and  $X$ being Lindel{\"o}f, the open cover $\{B_d(x,c'), x\in X\}$ has a countable sub cover $\{B_d(x_i,c'),i\in \mathbb{N}\}$. Therefore, $X=\bigcup_{i=1}^{\infty}B_d(x_i,c')$ and hence countable, which is a contradiction. Thus, $(X,\phi_{1,\infty})$ is never $\aleph_0$-expansive.
\end{proof}

\begin{thm}
Let $(X,d)$ be a metric space and $\phi_n:X\rightarrow X, n=1,2,\ldots$ be a sequence of continuous maps. Then $\phi_{1,\infty}$ is n-expansive if and only if it is n-expansive on $F\subseteq  X$, where $X\setminus F$ is finite, i.e, $(X,\phi_{1,\infty})$ is n-expansive if and only if $(F,\phi_{1,\infty}|_F)$ is n-expansive.
\end{thm}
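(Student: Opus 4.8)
The plan is to establish the two implications separately; the forward one is essentially free, while the reverse one is where the finitely many removed points must be controlled. For the forward direction, if $(X,\phi_{1,\infty})$ is $n$-expansive with constant $c$, then for every base point $x\in F$ the set $\{y:d(\phi_1^i(x),\phi_1^i(y))\leq c,\, i\in\mathbb{N}\}$ already has at most $n$ elements, so the same constant witnesses $n$-expansiveness of $(F,\phi_{1,\infty}|_F)$. Since only the base point is restricted to $F$ while orbits are still compared inside $X$, no invariance of $F$ is needed and the earlier restriction result for invariant subsets does not apply here.

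For the reverse direction, write $X\setminus F=\{p_1,\dots,p_m\}$ and let $c>0$ be an $n$-expansiveness constant for $(F,\phi_{1,\infty}|_F)$. I would pass to a smaller constant $c'=\min\{c/2,\ \tfrac{1}{4}\min_{i\neq j}d(p_i,p_j)\}$, chosen so that the removed points are pairwise separated well beyond $c'$ and so that a triangle-inequality comparison at level $c'$ lands inside $c$. Two consequences follow. First, no orbit can be shadowed within $c'$ by two distinct removed points at once: if $p_i,p_j$ both lay in some companion set, then $d(\phi_1^k(p_i),\phi_1^k(p_j))\leq 2c'$ for all $k$, which (comparing the base points $p_i,p_j$ themselves) contradicts the separation built into $c'$. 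Hence every $c'$-companion set meets $X\setminus F$ in at most one point. Second, for a base point $x\in F$ the $c'$-companion set is contained in its $c$-companion set, which has at most $n$ elements by hypothesis, so this case is already settled.

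The whole difficulty therefore concentrates on the finitely many base points $x=p_j\in X\setminus F$, and the crux is to show their companion sets have at most $n$ elements rather than $n+1$. Fix such a $p_j$ and let $S=\{y\in X:d(\phi_1^i(p_j),\phi_1^i(y))\leq c',\, i\in\mathbb{N}\}$. By the separation above, $S\cap(X\setminus F)=\{p_j\}$. For the part $S\cap F$, the decisive observation is that $p_j$ shadows every $y\in S\cap F$: from $d(\phi_1^i(p_j),\phi_1^i(y))\leq c'\leq c$ the point $p_j$ lies in the $c$-companion set of $y$ inside $X$, and a further triangle inequality places all of $S\cap F$ there as well; since $y\in F$ has at most $n$ such companions and the foreign point $p_j\notin F$ occupies one of those slots, we get $|S\cap F|\leq n-1$, whence $|S|\leq(n-1)+1=n$. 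I expect the main obstacle to be exactly this final count: obtaining $n$ and not merely $n+1$ relies on reading the $n$-expansiveness of $F$ with orbits compared in $X$, on the halving $c'\leq c/2$ that makes the doubled distance fit under $c$, and on the initial-position separation that distinguishes removed points whose later orbits might merge under the non-injective maps $\phi_n$.
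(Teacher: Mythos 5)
Your forward direction is fine, but the reverse direction has a genuine gap, and it sits exactly at the point you identify as the crux. The hypothesis that $(F,\phi_{1,\infty}|_F)$ is $n$-expansive bounds only the sets $\{y\in F: d(\phi_1^i(x),\phi_1^i(y))\le c \text{ for all } i\}$, i.e.\ companions \emph{inside} $F$, for base points in $F$. Twice you use it as if it bounded companion sets computed in $X$. First, for a base point $x\in F$ you declare the case ``already settled'' because the $c'$-companion set sits inside the $c$-companion set, ``which has at most $n$ elements by hypothesis'' --- but the hypothesis says nothing about companions in $X\setminus F$, and your separation argument only guarantees that \emph{at most one} removed point lies in that set, so all you actually get is $n+1$. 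Second, in the final count for a base point $p_j\in X\setminus F$, you argue that the foreign point $p_j$ ``occupies one of the $n$ slots'' of the $c$-companion set of $y\in S\cap F$; but the $n$ slots furnished by the hypothesis are elements of $F$, and $p_j\notin F$ cannot occupy any of them. So this argument too yields only $|S\cap F|\le n$, hence $|S|\le n+1$. Your constant $c'=\min\{c/2,\ \tfrac14\min_{i\neq j}d(p_i,p_j)\}$ cannot repair either step: it separates removed points from one another, but nothing prevents a removed point from lying (at time $0$ and at all later times) within $c'$ of a point of $F$.

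The missing idea --- the one the paper's proof uses --- is to apply the time-$0$ separation to the pairs (removed point, its shadowers in $F$) rather than to pairs of removed points. The paper first reduces to $X\setminus F=\{x\}$ (the finite case follows by removing one point at a time). The set $P=\{p\in F: d(\phi_1^i(x),\phi_1^i(p))\le c/2 \text{ for all } i\}$ has at most $n$ elements, since any two of its members are $c$-companions of each other \emph{inside} $F$. Now choose $c'>0$ smaller than $c/2$ and smaller than $d(x,p)$ for every $p\in P$. Then $x$ belongs to no $c'$-companion set of any $y\in F$: such membership forces $y\in P$ (because $c'\le c/2$) and simultaneously $d(x,y)\le c'$ (the time-$0$ term), contradicting the choice of $c'$; symmetrically, the $c'$-companion set of $x$ is $\{x\}$. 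Hence every $c'$-companion set in $X$ either equals $\{x\}$ or lies entirely in $F$, and the bound $n$ (not $n+1$) follows directly from the hypothesis. Note finally that both your argument and this one require the time-$0$ distance to count in the companion sets: with only $i\ge 1$ and non-injective maps the statement is actually false (take $X=\{a,b,x\}$ with $\phi_n(a)=a$, $\phi_n(b)=b$, $\phi_n(x)=a$ and $F=\{a,b\}$: then $F$ is $1$-expansive but every companion set of $a$ in $X$ contains $x$), so the ``orbits might merge'' worry you raise at the end is precisely why the separation must be imposed between removed points and their shadowers in $F$, not just among the removed points.
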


\begin{proof}
One can easily observe that if $\phi_{1,\infty}$ is n-expansive on $X$, then it is n-expansive on $F$ also.
Conversely, suppose $(F,\phi_{1,\infty}|_F)$ is n-expansive, then $X\setminus F$ being finite, it suffices to prove the result for $X\setminus F=\{x\}$, i.e. $X\setminus F$ being singleton.
Let $(F,\phi_{1,\infty})$ be n-expansive with constant of n-expansiveness $c>0$. Note that there can exist at most n elements, say $p_1, p_2, p_3,\ldots, p_n$ such that $d(\phi_1^i(x),\phi_1^i(p_j))\leq \delta /2, j=1,2,\ldots,n$, for all $i>0$. For if there exists some $p\in F$ such that $p\neq p_j, j=1,2,\ldots ,n$ and $d(\phi_1^i(x),\phi_1^i(p))\leq c/2$, for all $i>0$, then it contradicts the n-expansiveness of $(F,\phi_{1,\infty}|_F)$. In case no such $p_j$ exist, then any $c'$ such that $0<c'<c/2$ will serve as an n-expansiveness constant for $\phi_{1,\infty}$ on $X$.
\\Suppose there are n such elements, $p_1,p_2,\ldots, p_n$, then choose $0<c'<$min$\{d(p_j,x); j=1,2,\ldots,n\}\leq c/2$ as the n-expansive constant for $\phi_{1,\infty}$ on $X$.
\end{proof}

In the next example we show that the above result need not hold true if $X\setminus F$ is infinite.
\begin{exm} \end{exm} 
Let $X=\mathbb{R}$, $F=\mathbb{Z}$ and $\phi_n, n\in \mathbb{N}$ on $X$ be defined by: 
\[ \phi_n =
x+n, \hspace{2mm} \text{for} \ x \in \mathbb{R}  
\]

Then, the non-autonomous system $(X,\phi_{1,\infty})$ is not n-expansive for any $n\in \mathbb{N}$. However, the system $(F,\phi_{1,\infty}|_{F})$ is n-expansive with any $0<\delta<1$ as an n-expansiveness constant.

For autonomous dynamical systems, it has been proved that:
\\There exists no $\aleph_0$-expansive homeomorphism on a compact interval \cite{MR3694815}.
\\Thus, a natural question arises that whether a similar result holds for the non-autonomous systems. We are yet to answer this question completely, but in some cases we have been able to establish a similar result for non-autonomous systems.

\begin{thm}
Let $X$ be a compact interval of the real line $\mathbb{R}$ and $\phi_n:X\rightarrow X$, $n=1,2,\ldots$ be strictly increasing homeomorphisms, for every $n$  or strictly decreasing homeomorphisms, for every $n$. Then, $(X,\phi_{1,\infty})$ is never $\aleph_0$-expansive.
\end{thm}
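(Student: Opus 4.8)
The plan is to first collapse $\aleph_0$-expansiveness down to ordinary expansiveness by exploiting order. Since $X=[a,b]$ and each $\phi_n$ is a (strictly monotone) homeomorphism of $X$ onto itself, every finite composition $\phi_1^i$, and hence every $\phi_1^i$ with $i\in\mathbb{Z}$, is a monotone homeomorphism of $[a,b]$. I would first check that the companion set $S_c(x)=\{y\in X:\ d(\phi_1^i(x),\phi_1^i(y))\le c\text{ for all }i\in\mathbb{Z}\}$ is always a subinterval of $X$: if $y_1<y<y_2$ with $y_1,y_2\in S_c(x)$, then for each $i$ the value $\phi_1^i(y)$ lies between $\phi_1^i(y_1)$ and $\phi_1^i(y_2)$ (this uses only that $\phi_1^i$ is monotone, so it holds in both the increasing and the decreasing case, the latter merely reversing the order of the two bounding values), and both bounds lie in $[\phi_1^i(x)-c,\phi_1^i(x)+c]$, so $y\in S_c(x)$. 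Consequently $S_c(x)$ is convex, hence either a single point or uncountable. Therefore, if the system were $\aleph_0$-expansive with constant $c$, each $S_c(x)$ would be at most countable and thus a singleton, i.e. the system would be genuinely expansive. So it suffices to prove non-expansiveness: for every $c>0$ I must produce two distinct points whose full bi-orbit stays within $c$.

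To build such a pair I would work at the fixed endpoints. In the increasing case each $\phi_1^i$ fixes both $a$ and $b$, so $\phi_1^i(a)=a$ for all $i$, and I would look for $\delta>0$ with $\sup_{i\in\mathbb{Z}}\phi_1^i(a+\delta)\le a+c$; this would force $[a,a+\delta]\subseteq S_c(a)$, a nondegenerate companion set, contradicting expansiveness. Equivalently, assuming expansiveness I would feed the sequence $a+\tfrac1m\to a$ into the defining property: since each $a+\tfrac1m\neq a$ and $a$ is fixed, there are times $i_m$ with $\phi_1^{i_m}(a+\tfrac1m)>a+c$, and I would try to contradict this ``escape from $a$.'' For the decreasing case, where $\phi_n$ swaps $a$ and $b$, I would pass to the second-iterate system $(\phi_{1,\infty})^2$, whose maps $\phi_{2n-1}^{2n}$ are increasing and again fix $a$ and $b$, run the endpoint argument on the even times, and then reconcile the odd times through the single interposed homeomorphisms $\phi_{2i+1}$.

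The step I expect to be the genuine obstacle is exactly this simultaneous control of $\phi_1^{i}(a+\tfrac1m)$ over \emph{all} $i$. Monotonicity by itself does not prevent one particular iterate from dragging a point arbitrarily close to the fixed endpoint $a$ out past $a+c$, since the family $\{\phi_1^i\}_{i\in\mathbb{Z}}$ need not behave uniformly near $a$; indeed, a pointwise-in-$i$ convergence $\phi_1^{i}(a+\tfrac1m)\to a$ is immediate but is not enough, and an honest contradiction seems to require that this convergence be \emph{uniform in $i$}. This is precisely where I would invoke a uniform/equicontinuity-type estimate near the fixed endpoints, in the same spirit as the preceding Lindel\"of non-existence theorem, so that $\phi_1^{i}(a+\tfrac1m)\to a$ uniformly in $i$ and the escape $\phi_1^{i_m}(a+\tfrac1m)>a+c$ becomes impossible. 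I would therefore structure the proof so that this uniform estimate is the sole non-order-theoretic ingredient and isolate it as the crux, with the reduction to expansiveness and the endpoint/second-iterate bookkeeping being dictated entirely by the order structure of the interval.
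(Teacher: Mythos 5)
Your opening reduction is correct, and it is a genuinely different route from the paper's. The convexity observation (monotone $\phi_1^i$ preserve betweenness, so each companion set $S_c(x)$ is an interval, hence a singleton or uncountable) cleanly collapses $\aleph_0$-expansiveness to ordinary expansiveness; the paper never makes this observation. Instead, the paper argues that $\mathrm{Fix}(X,\phi_{1,\infty})$ is closed, so $U=X\setminus \mathrm{Fix}(X,\phi_{1,\infty})$ is a countable union of open intervals, claims that monotonicity forces every $x\in U$ to have converging semi-orbits, and then contradicts Theorem 3.9(iii), which says that $\aleph_0$-expansiveness makes the set $A(X,\phi_{1,\infty})$ of points with converging semi-orbits countable.

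However, the step you yourself flag as the crux is a genuine gap, and it cannot be filled. The ``uniform/equicontinuity-type estimate near the fixed endpoints'' you plan to invoke is exactly an equicontinuity hypothesis, and equicontinuity is \emph{not} assumed in this theorem (it is assumed in the preceding Lindel\"{o}f theorem, but is deliberately absent here). Worse, the estimate is false in general, and so is the non-expansiveness your reduction leaves you to prove. Concretely, on $X=[0,1]$ enumerate the dyadic subintervals as $I_1,I_2,\dots$; let $F_n$ be the increasing piecewise-linear homeomorphism of $[0,1]$ sending $I_n=[u,v]$ onto $[1/4,3/4]$ (and $[0,u]$ onto $[0,1/4]$, $[v,1]$ onto $[3/4,1]$); put $\phi_1=F_1$ and $\phi_n=F_n\circ F_{n-1}^{-1}$ for $n\ge 2$, so that $\phi_1^n=F_n$. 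Every $\phi_n$ is a strictly increasing homeomorphism, yet any pair $x\neq y$ contains some dyadic interval $I_m\subseteq[x,y]$, whence $|\phi_1^m(x)-\phi_1^m(y)|\ge 1/2$; so with $c=1/4$ every companion set is a singleton (a fortiori over $i\in\mathbb{Z}$), and the system is expansive, hence $\aleph_0$-expansive. Thus no endpoint or second-iterate bookkeeping can rescue your plan: the target statement fails without a uniformity hypothesis. Note that this obstacle is not an artifact of your approach. The paper's own proof crosses the same river when it asserts that every $x\in U$ has converging semi-orbits: non-autonomously, monotone maps need not produce monotone orbits (e.g.\ $\phi_{1,\infty}=\{f,f^{-1},f,f^{-1},\dots\}$ gives oscillating orbits), and the dyadic example above contradicts the theorem's conclusion itself. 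The statement becomes true if one adds equicontinuity, but then it is already a special case of the Lindel\"{o}f theorem, since a compact interval is an uncountable Lindel\"{o}f metric space.
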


\begin{proof}
Let if possible $(X,\phi_{1,\infty})$ be $\aleph_0$-expansive for some non-autonomous system $\phi_{1,\infty}$. Since Fix$(X,\phi_{1,\infty})$ is a closed subset of $X$, therefore $U=X\setminus Fix(X,\phi_{1,\infty})$ is open and hence is a countable union of pair-wise disjoint open intervals $\{I_j\}_{j>0}$. As each $\phi_n$ is either strictly increasing or strictly decreasing, $\phi_{1,\infty}$ is so. Thus, for any $x\in U$ there exists a $j>0$ such that $x\in I_j$ and has converging semi-orbit under $\phi_{1,\infty}$. Therefore, the set $A(X,\phi_{1,\infty})$ is uncountable which is a contradiction by Theorem $3.9[iii.]$ and hence we have $(X,\phi_{1,\infty})$ is never $\aleph_0$-expansive.
\end{proof}

Since by Theorem 3.5, we have $\phi_{1,\infty}$ is $\aleph_0$-expansive if and only if $(\phi_{1,\infty})^k$ is $\aleph_0$-expansive. We get the following result.

\begin{Cor}
Let $X$ be a compact interval of the real line $\mathbb{R}$ and $\phi_n:X\rightarrow X$ be such that $\phi_{2n-1}:X\rightarrow X$, $n=1,2,\ldots$ are strictly increasing and $\phi_{2n}:X\rightarrow X$, $n=1,2,\ldots$ are strictly decreasing homeomorphisms on $X$. Then, $(X,\phi_{1,\infty})$ is never $\aleph_0$-expansive.
\end{Cor}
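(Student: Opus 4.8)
The plan is to reduce the statement to Theorem 3.13 by passing to the second iterate and then transferring the conclusion back through Theorem 3.5. First I would form the iterated non-autonomous system $(\phi_{1,\infty})^2=\{h_n\}_{n=1}^{\infty}$; by the definition of the $k$-th iterate with $k=2$, each term is the two-fold block $h_n=\phi_{2n}\circ\phi_{2n-1}$ for every $n>0$.

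The key elementary observation is a monotonicity bookkeeping. Since $\phi_{2n-1}$ is a strictly increasing homeomorphism and $\phi_{2n}$ is a strictly decreasing homeomorphism, the composite $h_n=\phi_{2n}\circ\phi_{2n-1}$ is a strictly decreasing homeomorphism of $X$: if $x<y$ then $\phi_{2n-1}(x)<\phi_{2n-1}(y)$, and applying the order-reversing $\phi_{2n}$ gives $h_n(x)>h_n(y)$. Thus every member of $(\phi_{1,\infty})^2$ is a strictly decreasing self-homeomorphism of the compact interval $X$, so this iterated system falls exactly under the hypotheses of Theorem 3.13 in its strictly decreasing case. Applying Theorem 3.13 yields that $(\phi_{1,\infty})^2$ is never $\aleph_0$-expansive.

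Finally I would invoke Theorem 3.5 with $k=2$, which asserts that $\phi_{1,\infty}$ is $\aleph_0$-expansive if and only if $(\phi_{1,\infty})^2$ is $\aleph_0$-expansive. Combining this equivalence with the previous step, $\phi_{1,\infty}$ cannot be $\aleph_0$-expansive, which is the desired conclusion.

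The step I expect to require the most care is the transfer via Theorem 3.5, specifically the implication that if $\phi_{1,\infty}$ is $\aleph_0$-expansive then so is $(\phi_{1,\infty})^2$. The companion set for $(\phi_{1,\infty})^2$ constrains the orbit only at even times, so a priori it is larger than the companion set for $\phi_{1,\infty}$; passing from countability of the latter to countability of the former forces one to shrink the expansiveness constant so that closeness at even times propagates to the intervening odd times, and this gap-filling is exactly where the equicontinuity hypothesis built into Theorem 3.5 is consumed. It is worth emphasizing why the detour through the square is essential rather than cosmetic: a direct attempt to contradict Theorem 3.9[iii] for $\phi_{1,\infty}$ itself would fail, because under the alternating pattern the forward compositions $\phi_1^n$ are not monotone (the parities produce the pattern increasing, decreasing, decreasing, increasing, $\ldots$), so the semi-orbits of $\phi_{1,\infty}$ may oscillate instead of converging. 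Passing to the square, whose terms are uniformly monotone, is precisely what restores the convergence of semi-orbits that drives the argument of Theorem 3.13.
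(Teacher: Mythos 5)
Your proposal coincides with the paper's own proof: the paper justifies this corollary exactly by noting that the second iterate $(\phi_{1,\infty})^2=\{\phi_{2n}\circ\phi_{2n-1}\}_{n=1}^{\infty}$ consists of strictly decreasing homeomorphisms, so Theorem 3.13 applies to it, and then transferring back through the equivalence of Theorem 3.5 with $k=2$. Your closing remark about the delicate direction of Theorem 3.5 is well taken, and in fact sharper than the paper itself, since Theorem 3.5 carries an equicontinuity hypothesis on $\{\phi_n\}$ that the corollary's statement does not explicitly include --- a point the paper glosses over exactly as you do.
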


\section{Meagre-expansiveness for Non-autonomous Systems}
In this section, we define and study give meagre-expansive non-autonomous dynamical systems. We give an important class of meagre-expansive non-autonomous dynamical systems by proving that on a locally connected compact metric space without isolated points, all $\aleph_0$-expansive systems are meagre-expansive.

\begin{defn}
Let $(X,d)$ be a metric space and $\phi_n:X\rightarrow X$ be a sequence of continuous maps for $n=1,2,\ldots$. The non-autonomous system $(X,\phi_{1,\infty})$ or $\phi_{1,\infty}$ is said to be $\it{meagre-expansive}$ if there exists a constant $c>0$ such that for every $x\in X$, the set $\{y\in X:d(\phi_1^i(x),\phi_1^i(y))\leq c,$ for all $i\in \mathbb{N}\}$ is nowhere dense; $c>0$ is called a meagre-expansiveness constant.
\end{defn}
Similarly, a sequence of homeomorphisms $\phi_n:X\rightarrow X$ is meagre-expansive, if for every $x\in X$, the set $\{y\in X:d(\phi_1^i(x),\phi_1^i(y))\leq c,$ for all $ i\in \mathbb{Z}\}$ is nowhere dense.

\begin{defn}
A Borel probability measure $\mu$ of a metric $(X,d)$ is $\it{meagre-expansive}$ with respect to the non-autonomous system $(X,\phi_{1,\infty})$ if there exists $c>0$ such that $\mu(IntS_c(x))$=0, for all $x\in X$, where $S_c(x)=\{y\in X: d(\phi_1^i(x),\phi_1^i(y))\leq c,i\in \mathbb{Z}$\}; $c>0$ is called meagre-expansiveness constant.
\end{defn}

\begin{thm}
Let $(X_1,d_1,\phi_{1,\infty})$ and $(X_2,d_2,\psi_{1,\infty})$ be two non-autonomous systems such that $\phi_{1,\infty}$ is uniformly conjugate to $\psi_{1,\infty}$. If $\phi_{1,\infty}$ is meagre-expansive, then so is $\psi_{1,\infty}$.
\end{thm}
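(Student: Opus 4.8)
The plan is to mirror the strategy of Theorem 3.3, replacing the cardinality bookkeeping there with a topological one. Let $c>0$ be a meagre-expansiveness constant for $\phi_{1,\infty}$, and let $h\colon X_1\to X_2$ be the uniform homeomorphism witnessing the conjugacy, so that $h\circ\phi_n=\psi_n\circ h$ for all $n$ and hence $\phi_1^n\circ h^{-1}=h^{-1}\circ\psi_1^n$ for all $n$. Since $h^{-1}$ is uniformly continuous, I would first extract a constant $c'>0$ such that $d_2(a,b)\le c'$ implies $d_1(h^{-1}(a),h^{-1}(b))\le c$. This $c'$ is the candidate meagre-expansiveness constant for $\psi_{1,\infty}$.

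Next, fix $w\in X_2$ and consider the $\psi$-dynamical set $S_{c'}(w)=\{y\in X_2: d_2(\psi_1^i(w),\psi_1^i(y))\le c',\ \forall i\}$. For any $y$ in this set, applying $h^{-1}$ together with the choice of $c'$ gives $d_1(h^{-1}\psi_1^i(w),h^{-1}\psi_1^i(y))\le c$ for all $i$; rewriting via the conjugacy relation as $d_1(\phi_1^i h^{-1}(w),\phi_1^i h^{-1}(y))\le c$ shows that $h^{-1}(y)$ lies in the corresponding $\phi$-dynamical set $S_c(h^{-1}(w))=\{z\in X_1: d_1(\phi_1^i(h^{-1}(w)),\phi_1^i(z))\le c,\ \forall i\}$. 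Consequently $S_{c'}(w)\subseteq h\bigl(S_c(h^{-1}(w))\bigr)$.

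It then remains to verify that the right-hand side is nowhere dense. By meagre-expansiveness of $\phi_{1,\infty}$, the set $S_c(h^{-1}(w))$ is nowhere dense in $X_1$. The key observation — which replaces the role of cardinality preservation in Theorem 3.3 — is that a homeomorphism carries nowhere dense sets to nowhere dense sets: for a homeomorphism $h$ one has $\overline{h(A)}=h(\overline{A})$ and $\mathrm{Int}(h(B))=h(\mathrm{Int}(B))$, so that $\mathrm{Int}(\overline{h(A)})=h(\mathrm{Int}(\overline{A}))=\emptyset$ whenever $A$ is nowhere dense. Hence $h\bigl(S_c(h^{-1}(w))\bigr)$ is nowhere dense in $X_2$, and since any subset of a nowhere dense set is again nowhere dense, so is $S_{c'}(w)$. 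As $w\in X_2$ was arbitrary, $c'$ serves as a meagre-expansiveness constant for $\psi_{1,\infty}$.

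The argument is essentially routine once the template of Theorem 3.3 is in place; the only point requiring genuine care is the topological fact that homeomorphisms preserve nowhere density, and I would isolate this as the crux of the proof rather than the construction of $c'$, which is identical to the uniform-continuity step already used for $n$-expansiveness.
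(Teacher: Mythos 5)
Your proof is correct and follows essentially the same route as the paper: the same constant $c'$ extracted from uniform continuity of $h^{-1}$, the same conjugacy rewriting $\phi_1^n\circ h^{-1}=h^{-1}\circ\psi_1^n$, and the same transfer of nowhere density from $X_1$ to $X_2$ through $h$. If anything, your write-up is more careful than the paper's: the paper asserts an \emph{equality} between the $\psi$-dynamical set and its $\phi$-counterpart where only the inclusion $S_{c'}(w)\subseteq h\bigl(S_c(h^{-1}(w))\bigr)$ is justified by the one-directional uniform-continuity estimate, and it passes silently over the fact that a homeomorphism carries nowhere dense sets to nowhere dense sets --- precisely the point you correctly isolate and prove as the crux.
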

\begin{proof}
Let $c>0$ be a meagre-expansive constant for $\phi_{1,\infty}$. Since $\phi_{1,\infty}$ is uniformly conjugate to $\psi_{1,\infty}$, so there exists a uniform homeomorphism $h:X_1\rightarrow X_2$ such that $h\circ \phi_n=\psi_n\circ h$, for all $n>0$. Thus, $\phi_1^n\circ h^{-1}=h^{-1}\circ \psi_1^n$, for all $n>0$.
As $h^{-1}$ is uniformly continuous, therefore for every $c>0$, there is $c' >0$ such that for $x,y\in X_2$, $d_2(x,y)\leq c' \implies d_1(h^{-1}(x),h^{-1}(y))\leq c$.
\\For a fix $x\in X_2$, the set $S = \{y\in X_2:d_2(\psi^n(x),\psi_1^n(y))\leq c', \forall n>0\} =\{y\in X_2:d_1(h^{-1}(\psi_1^n(x)),h^{-1}(\psi_1^n(y)))\leq c', \forall n>0\} =\{y\in Y:d_1(\phi^n(h^{-1}(x)),\phi_1^n(h^{-1}(y)))\leq c, \forall n>0\}.$ 
\\Since $\phi_{1,\infty}$ is meagre-expansive with meagre-expansiveness constant $c>0$, we can say that $S$ is nowhere dense and hence $\psi_{1,\infty}$ is meagre-expansive with meagre-expansiveness constant $c'>0$.
\end{proof}

\begin{Cor}
Let $(X,d)$ be a compact metric space. If $(X,\phi_{1,\infty})$ is meagre-expansive, then so is $(X,\psi_{1,\infty})$, where $\psi_{1,\infty}=\{h\circ \phi_n\circ h^{-1}\}_{n=1}^{\infty}$ and $h:X\rightarrow X$ is a homeomorphism. 
\end{Cor}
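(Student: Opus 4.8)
The plan is to recognize this as an immediate application of Theorem 4.1, the only work being to verify that the hypotheses of that theorem are met. First I would observe that $\psi_{1,\infty}=\{h\circ\phi_n\circ h^{-1}\}_{n=1}^{\infty}$ is $h$-conjugate to $\phi_{1,\infty}$. Indeed, writing $\psi_n=h\circ\phi_n\circ h^{-1}$, one checks directly that
\begin{align*}
\psi_n\circ h = (h\circ\phi_n\circ h^{-1})\circ h = h\circ\phi_n,
\end{align*}
so the conjugacy relation $h\circ\phi_n=\psi_n\circ h$ holds for every $n>0$, which is precisely the defining condition for $h$-conjugacy.

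The one point that genuinely requires the compactness hypothesis is upgrading the conjugating homeomorphism to a \emph{uniform} homeomorphism, since Theorem 4.1 is stated for uniformly conjugate systems. Here I would invoke the standard fact that a continuous map on a compact metric space is uniformly continuous. Since $X$ is compact and $h:X\rightarrow X$ is a homeomorphism, both $h$ and $h^{-1}$ are continuous on the compact space $X$, hence both are uniformly continuous; therefore $h$ is a uniform homeomorphism in the sense of the earlier definition. Consequently $\phi_{1,\infty}$ and $\psi_{1,\infty}$ are uniformly $h$-conjugate.

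Having established uniform conjugacy, I would simply apply Theorem 4.1: since $\phi_{1,\infty}$ is meagre-expansive and is uniformly conjugate to $\psi_{1,\infty}$, the system $\psi_{1,\infty}$ is meagre-expansive as well. I do not anticipate any real obstacle here; the entire content of the corollary is the reduction to Theorem 4.1, and the only step that could be overlooked is the use of compactness to guarantee uniform continuity of $h$ and $h^{-1}$ (without which an arbitrary homeomorphism need not be uniform). Thus the proof is short, and its substance lies entirely in citing Theorem 4.1 together with this compactness observation.
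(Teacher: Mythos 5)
Your proof is correct and is exactly the argument the paper intends: the result is stated as an immediate corollary of Theorem 4.1 with no written proof, and the reduction you give (verifying the conjugacy relation $h\circ\phi_n=\psi_n\circ h$ and using compactness of $X$ to conclude that $h$ and $h^{-1}$ are uniformly continuous, so the conjugacy is uniform) is the whole content. No gaps.
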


\begin{thm}
Let $(X,d)$ be a metric space and $\phi_n:X\rightarrow X, n=1,2,\ldots$ be a sequence of equicontinuous maps. For any positive integer $k$, the non-autonomous system $(X,\phi_{1,\infty})$ is meagre-expansive if and only if $(\phi_{1,\infty})^k$ is meagre-expansive.
\end{thm}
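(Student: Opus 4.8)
The plan is to follow the template of Theorem~3.5 verbatim, replacing the cardinality predicate ``has at most $n$ (respectively countable) elements'' by the topological predicate ``is nowhere dense,'' and exploiting the one structural fact that makes both arguments run: a subset of a nowhere dense set is itself nowhere dense. Write $\psi_{1,\infty}=(\phi_{1,\infty})^{k}$ with $\psi_n=\phi_{(n-1)k+1}^{nk}$, so that $\psi_1^{n}=\phi_1^{nk}$, and for $x\in X$, $c>0$ abbreviate the two dynamical sets
\[
S_c^{\phi}(x)=\{y\in X: d(\phi_1^{i}(x),\phi_1^{i}(y))\le c,\ \forall i\in\mathbb N\},\qquad S_c^{\psi}(x)=\{y\in X: d(\phi_1^{ik}(x),\phi_1^{ik}(y))\le c,\ \forall i\in\mathbb N\}.
\]
Since the defining condition of $S_c^{\phi}(x)$ constrains $\phi_1^{j}$ for every index $j$, while that of $S_c^{\psi}(x)$ constrains only the indices $j=ik$, one always has the inclusion $S_c^{\phi}(x)\subseteq S_c^{\psi}(x)$.

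The converse implication is then immediate and needs no equicontinuity. If $(\phi_{1,\infty})^{k}$ is meagre-expansive with constant $c>0$, then $S_c^{\psi}(x)$ is nowhere dense for every $x$; since $S_c^{\phi}(x)\subseteq S_c^{\psi}(x)$, the set $S_c^{\phi}(x)$ is nowhere dense as a subset of a nowhere dense set, and hence $\phi_{1,\infty}$ is meagre-expansive with the same constant $c$.

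For the forward implication I would reproduce the equicontinuity estimate from Theorem~3.5. Given a meagre-expansiveness constant $c>0$ for $\phi_{1,\infty}$, equicontinuity yields, for each block position $j$ with $mk+1\le j\le(m+1)k$, a threshold $c_j>0$ such that $d(u,v)\le c_j$ implies $d(\phi_{mk+1}^{j}(u),\phi_{mk+1}^{j}(v))\le c$, and---this is precisely where equicontinuity rather than mere continuity is used---$c_j$ may be taken independent of $m$. Put $c'=\min\{c_j:1\le j\le k\}$. The claim is that $S_{c'}^{\psi}(x)\subseteq S_c^{\phi}(x)$: indeed, if $y\in S_{c'}^{\psi}(x)$ then $d(\phi_1^{mk}(x),\phi_1^{mk}(y))\le c'$ for every $m\ge 0$, and writing an arbitrary $j$ as $mk+1\le j\le(m+1)k$ and applying $\phi_{mk+1}^{j}$ to the pair $\phi_1^{mk}(x),\phi_1^{mk}(y)$ yields $d(\phi_1^{j}(x),\phi_1^{j}(y))\le c$ for all $j$, i.e.\ $y\in S_c^{\phi}(x)$. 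As $S_c^{\phi}(x)$ is nowhere dense, so is its subset $S_{c'}^{\psi}(x)$, and therefore $(\phi_{1,\infty})^{k}$ is meagre-expansive with constant $c'$.

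The only genuine obstacle is this forward direction, and within it two points require care. First, the equicontinuity thresholds $c_j$ must be uniform in the block index $m$, so that a single $c'>0$ serves all of $\psi_{1,\infty}$; this is exactly the content of the hypothesis that the $\phi_n$ be equicontinuous rather than merely continuous. Second, the inclusions must run in the direction that permits the ``subset of nowhere dense is nowhere dense'' principle: the reverse inclusion $S_c^{\phi}\subseteq S_c^{\psi}$ is useless for the forward implication, which is why one is forced to shrink the constant to $c'$ first. A minor bookkeeping point is the treatment of the index $i=0$ (the term $\phi_1^{0}=\mathrm{id}$, giving $d(x,y)\le c$), handled consistently with the proof of Theorem~3.5; since incorporating such a constraint only intersects the sets with a closed ball and thereby shrinks them, it does not disturb nowhere density.
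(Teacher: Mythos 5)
Your proof is correct and follows essentially the same route as the paper's: for the forward direction, equicontinuity yields a block-uniform constant $c'$ giving the inclusion $S_{c'}^{\psi}(x)\subseteq S_{c}^{\phi}(x)$, and for the converse the trivial inclusion $S_{c}^{\phi}(x)\subseteq S_{c}^{\psi}(x)$ together with the principle that a subset of a nowhere dense set is nowhere dense finishes the argument. Your write-up is in fact slightly cleaner than the paper's: the paper asserts an \emph{equality} of the two dynamical balls in the converse where only the inclusion you state is true (and is all that is needed), and you correctly note that equicontinuity plays no role in that direction.
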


\begin{proof}
Let $\delta>0$ be a meagre-expansive constant for $f_{1,\infty}$. Since $\{\phi_n\}_{n=1}^\infty$ is an equicontinuous family of maps, for any $m\geq 0$, $mk+1\leq j\leq (m+1)k$; $\phi_j$ is uniformly continuous on $X$ and thus there exists $\delta_j>0$ such that \begin{align*}
d(x,y)\leq\delta_j\implies d(\phi_{mk+1}^j(x),\phi_{mk+1}^j(y))\leq\delta.
\end{align*} 
Since $\phi_n:X\rightarrow X,n=1,2,\ldots$ are equicontinuous maps, $c_j$ doesn't depend on m. Take $c'=min\{c_j:mk+1\leq j \leq (m+1)k\}$. So for any $m\geq 0$, $d(x,y)\leq c'\implies d(\phi_{mk+1}^j(x),\phi_{mk+1}^j(y))\leq c.$ Let $(\phi_{1,\infty})^k=\psi_{1,\infty}=\{\psi_n\}_{n=1}^{\infty}$, where $\psi_n=\phi_{(n-1)k+1}^{nk}$ and $\psi_1^n=\psi_n\circ \ldots \circ \psi_1$.
Note that $\phi_1^{nk}=\psi_1^n$. Thus for any $m\geq 0$ and $mk\leq j\leq (m+1)k$, $d(\psi_1^m(x),\psi_1^m(y))\leq c$ which implies $d(\phi_1^{mk}(x),\phi_1^{mk}(y))\leq c'$ and hence we get that $d(\phi_{mk+1}^j(\phi_1^{mk}(x)),\phi_{mk+1}^j(\phi_1^{mk}(y)))\leq c$ which implies $d(\phi_1^j(x),\phi_1^j(y))\leq c$. Since $c$ is meagre-expansive constant for $f_{1,\infty}$, the set $\{y\in X:d(\phi_1^i(x),\phi_1^i(y))\leq c ;i>0\}$ is nowhere dense. Therefore, $\{y\in X:d(\psi_1^i(x),\psi_1^i(y))\leq c' ;i>0\}$ is nowhere dense and hence $(\phi_{1,\infty})^k$ is meagre-expansive with constant of meagre-expansiveness $c'>0$.
\\Conversely, if $(\phi_{1,\infty})^k$ is meagre-expansive with constant of meagre-expansiveness $c>0$, then for any $x\in X$, the set $\{d(\psi_1^n(x),\psi_1^n(y))\leq c,n>0\}$ is nowhere dense, where $\psi_n=\phi_{(n-1)k+1}^{nk}$. Thus, the set $\{y\in X:d(\phi_1^{nk}(x),\phi_1^{nk}(y))\leq c,$ for all $n>0\}$ = $\{d(\phi_1^j(x),\phi_1^j(y))\leq c,j> 0\}$ is nowhere dense. Hence, $(\phi_{1,\infty})$ is meagre-expansive with constant of meagre-expansiveness $c>0$. 
\end{proof}

\begin{thm}
Let $(X,\phi_{1,\infty})$ be a non-autonomous system, where $\phi_{1,\infty}$ is a family of self-homeomorphisms. Then $(\phi_{1,\infty})$ is meagre-expansive if and only if $(\phi_{1,\infty})^{-1}$ is meagre-expansive.
\end{thm}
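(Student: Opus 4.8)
The plan is to follow the same strategy used for the $n$-expansive and $\aleph_0$-expansive cases in Theorem 3.6, but to exploit the fact that here the relevant sets coincide exactly rather than merely having the same cardinality. First I would fix a meagre-expansiveness constant $c>0$ for $\phi_{1,\infty}$ and, for an arbitrary point $x\in X$, write down the set
\[
S(x)=\{y\in X:d(\phi_1^i(x),\phi_1^i(y))\le c,\ \forall i\in\mathbb{Z}\},
\]
which by hypothesis is nowhere dense.

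Next I would identify the corresponding set for the inverse system. Writing $(\phi_{1,\infty})^{-1}=\{\phi_n^{-1}\}_{n=1}^\infty$ and recalling the convention $(\phi_1^i)^{-1}=\phi_1^{-i}$ for the two-sided iterates, the set defining meagre-expansiveness for $(\phi_{1,\infty})^{-1}$ at $x$ is
\[
S'(x)=\{y\in X:d((\phi_1^i)^{-1}(x),(\phi_1^i)^{-1}(y))\le c,\ \forall i\in\mathbb{Z}\}.
\]
The key step is the observation that the index $i$ ranges over all of $\mathbb{Z}$ symmetrically: substituting $j=-i$ turns the family of conditions $\{d(\phi_1^{-i}(x),\phi_1^{-i}(y))\le c:i\in\mathbb{Z}\}$ into $\{d(\phi_1^{j}(x),\phi_1^{j}(y))\le c:j\in\mathbb{Z}\}$, which is precisely the family defining $S(x)$. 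Hence $S'(x)=S(x)$ as subsets of $X$. Since $S'(x)=S(x)$ is nowhere dense for every $x\in X$, the system $(\phi_{1,\infty})^{-1}$ is meagre-expansive with the same constant $c$, and the converse follows by applying the identical argument to $(\phi_{1,\infty})^{-1}$, whose inverse is $\phi_{1,\infty}$.

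The only point requiring care — and the conceptual difference from Theorem 3.6 — is that meagre-expansiveness is a \emph{topological} condition on $S(x)$, so a bijection between $S(x)$ and $S'(x)$ (which would be enough for counting elements in the $n$-expansive or $\aleph_0$-expansive setting) does not suffice here; one genuinely needs $S(x)$ and $S'(x)$ to be the same set so that nowhere-density transfers. The reindexing $i\mapsto -i$ over $\mathbb{Z}$ delivers exactly this equality, which is why the argument goes through cleanly. I would be careful to state explicitly the convention $(\phi_1^i)^{-1}=\phi_1^{-i}$ under which the iterates of $(\phi_{1,\infty})^{-1}$ are taken, so that the set-equality step is unambiguous; once that is fixed, no compactness or equicontinuity hypothesis is needed, only that each $\phi_n$ is a homeomorphism.
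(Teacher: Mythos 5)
Your proposal is correct and takes essentially the same approach as the paper: the paper's proof consists precisely of the observation that the set $\{y\in X: d(\phi_1^i(x),\phi_1^i(y))\leq c,\ i\in \mathbb{Z}\}$ coincides with the set $\{y\in X: d((\phi_1^i)^{-1}(x),(\phi_1^i)^{-1}(y))\leq c,\ i\in \mathbb{Z}\}$ when the index runs over all of $\mathbb{Z}$, so nowhere-density transfers with the same constant $c$, and the converse follows symmetrically. Your write-up is in fact more explicit than the paper's, since you state the convention $(\phi_1^i)^{-1}=\phi_1^{-i}$ under which the two-sided iterates of the inverse system are taken and you explain why a genuine set equality (rather than a bijection, which sufficed in the counting-based Theorem 3.6) is needed for a topological property like nowhere-density.
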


\begin{proof}
Let $c>0$ be a meagre-expansive constant for $\phi_{1,\infty}$. For a fixed $x\in X$; the set \begin{align*}
\{y\in X: d(\phi_1^i(x),\phi_1^i(y))\leq c,i\in \mathbb{Z}\}
\end{align*} is nowhere dense, i.e, the set
\begin{align*}
\{y\in X: d((\phi_1^i)^{-1}(x),(\phi_1^i)^{-1}(y))\leq c,i\in \mathbb{Z}\}
\end{align*} is nowhere dense. Thus, $(\phi_{1,\infty})^{-1}$ is meagre-expansive.
\\The converse can proved similarly.
\end{proof}

\begin{Cor}
The non-autonomous system $(X,\phi_{1,\infty})$ on a compact metric space $X$ is meagre-expansive if and only if $(\phi_{1,\infty})^{k}$ is meagre-expansive, for $k\in \mathbb{Z}-\{0\}$, where $\{\phi_n\}_{n=1}^{\infty}$ is a family of equicontinuous self-homeomorphisms on $X$. .
\end{Cor}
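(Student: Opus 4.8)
The plan is to deduce the corollary by combining the two immediately preceding theorems: Theorem 4.2, which handles positive iterates of an equicontinuous family, and Theorem 4.3, which relates a family of self-homeomorphisms to its inverse. Since the statement quantifies over arbitrary $k\in\mathbb{Z}-\{0\}$, I would split the argument into the cases $k>0$ and $k<0$.

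For $k>0$ the conclusion is immediate. The hypotheses guarantee that $\{\phi_n\}_{n=1}^{\infty}$ is an equicontinuous family of self-homeomorphisms, so Theorem 4.2 applies verbatim and yields that $(X,\phi_{1,\infty})$ is meagre-expansive if and only if $(\phi_{1,\infty})^{k}$ is meagre-expansive.

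For $k<0$, write $k=-m$ with $m>0$, and route the equivalence through the positive iterate $(\phi_{1,\infty})^{m}$. First, by Theorem 4.2, $(X,\phi_{1,\infty})$ is meagre-expansive if and only if $(\phi_{1,\infty})^{m}$ is meagre-expansive. Next, I would observe that each component map of $(\phi_{1,\infty})^{m}$, namely $\psi_n=\phi_{(n-1)m+1}^{nm}$, is a finite composition of the self-homeomorphisms $\phi_j$ and is therefore itself a self-homeomorphism of $X$; hence $(\phi_{1,\infty})^{m}$ is a genuine family of self-homeomorphisms and Theorem 4.3 applies to it, giving that $(\phi_{1,\infty})^{m}$ is meagre-expansive if and only if $\left((\phi_{1,\infty})^{m}\right)^{-1}$ is meagre-expansive. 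Finally I would identify $\left((\phi_{1,\infty})^{m}\right)^{-1}$ with $(\phi_{1,\infty})^{-m}=(\phi_{1,\infty})^{k}$ using the definitions of the iterate and the inverse from Section 2. Chaining these equivalences delivers the statement for negative $k$.

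The routine parts are the two invocations of the cited theorems; the only point demanding care, and the main obstacle, is the bookkeeping that identifies the negative iterate $(\phi_{1,\infty})^{k}$ with $\left((\phi_{1,\infty})^{m}\right)^{-1}$. One must check, directly from the definition $h_n=\phi_{nm}\circ\cdots\circ\phi_{(n-1)m+1}$ of the $m$th iterate together with the convention $(h_{1,\infty})^{-1}=\{h_n^{-1}\}$, that inverting each block of $m$ composed homeomorphisms reproduces exactly the family assigned to the exponent $-m$. I note also that this route deliberately applies Theorem 4.3, which requires only self-homeomorphisms and not equicontinuity, to the family $(\phi_{1,\infty})^{m}$, so that equicontinuity is invoked solely where Theorem 4.2 needs it and one never has to verify equicontinuity of the inverse family.
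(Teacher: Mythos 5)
Your proposal is correct and follows exactly the route the paper intends: the corollary is stated without proof immediately after Theorem 4.2 (positive iterates under equicontinuity) and Theorem 4.3 (inverses of self-homeomorphism families), and combining them — handling $k<0$ via $(\phi_{1,\infty})^{k}=\left((\phi_{1,\infty})^{-k}\right)^{-1}$ — is precisely the intended argument. Your remark that Theorem 4.3 should be applied to the positive iterate (so that equicontinuity of the inverse family is never needed) is a careful and correct ordering of the two steps.
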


\begin{thm}
Let $(X,d)$ be a locally connected metric space without isolated points and $\phi_n:X\rightarrow X, n=1,2,\ldots$ be a sequence of continuous maps. If $(X,\phi_{1,\infty})$ is cw-expansive, then it is meagre-expansive.
\end{thm}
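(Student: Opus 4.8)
The plan is to show that the cw-expansiveness constant $c$ yields a meagre-expansiveness constant, concretely $c/2$. Fix $x \in X$ and consider the companion set
\[
S = \{y \in X : d(\phi_1^i(x), \phi_1^i(y)) \le c/2 \text{ for all } i \in \mathbb{N}\},
\]
which I want to prove is nowhere dense. First I would observe that $S$ is closed: for each $i$ the map $y \mapsto d(\phi_1^i(x), \phi_1^i(y))$ is continuous, so $\{y : d(\phi_1^i(x), \phi_1^i(y)) \le c/2\}$ is closed, and $S$ is the intersection of these sets over $i \in \mathbb{N}$. Consequently $\overline{S} = S$, and nowhere density reduces to showing $\mathrm{Int}(S) = \emptyset$.

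Next I would argue by contradiction. Suppose $\mathrm{Int}(S) \ne \emptyset$ and pick $p \in \mathrm{Int}(S)$ with an open neighbourhood $U \subseteq S$. Since $X$ is locally connected, there is a connected open set $V$ with $p \in V \subseteq U$; and since $X$ has no isolated points, $V$ contains more than one point, so it is non-degenerate. The key object is $C := \overline{V}$: being the closure of a connected set it is connected, it contains the non-degenerate set $V$ so it is non-degenerate, and (invoking compactness of the ambient space, see below) it is compact, hence a non-degenerate subcontinuum. Moreover $C = \overline{V} \subseteq \overline{S} = S$ because $S$ is closed.

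Now I would exploit that $C \subseteq S$ forces the forward diameters of $C$ to stay bounded by $c$. Indeed, for any $y, z \in C$ and every $i \in \mathbb{N}$,
\[
d(\phi_1^i(y), \phi_1^i(z)) \le d(\phi_1^i(y), \phi_1^i(x)) + d(\phi_1^i(x), \phi_1^i(z)) \le \tfrac{c}{2} + \tfrac{c}{2} = c,
\]
so $\mathrm{diam}(\phi_1^i(C)) \le c$ for all $i$. This directly contradicts cw-expansiveness, which demands $\mathrm{diam}(\phi_1^n(C)) > c$ for some $n$. Hence $\mathrm{Int}(S) = \emptyset$, the set $S$ is nowhere dense, and since $x$ was arbitrary, $\phi_{1,\infty}$ is meagre-expansive with constant $c/2$.

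The hard part is the compactness needed to make $C = \overline{V}$ an honest subcontinuum: local connectedness together with the absence of isolated points only guarantees a non-degenerate connected open set $V$, and in a general (non-compact, non-locally-compact) metric space its closure need not be compact, so one cannot immediately call $\overline{V}$ a continuum. The argument goes through cleanly when $X$ is assumed compact (as the section introduction indeed states), where $\overline{V}$ is automatically a continuum; otherwise one would need local compactness or some extra hypothesis to produce a non-degenerate subcontinuum inside $V$. I would therefore either fold compactness into the hypotheses or isolate a lemma ensuring that in such spaces every non-degenerate open set contains a non-degenerate subcontinuum, and then run the diameter argument above verbatim.
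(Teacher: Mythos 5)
Your argument follows essentially the same route as the paper's: assume the companion set $S$ of some point is not nowhere dense, use local connectedness to extract a non-degenerate connected open set inside $\overline{S}$, bound all forward diameters by $c$ via the triangle inequality, and contradict cw-expansiveness; your constant $c/2$ works just as well as the paper's choice $0<c'<c/2$. The differences are all points of care, and they favour you. The paper applies the cw-expansiveness hypothesis directly to the open connected set $U$, even though the definition only quantifies over non-degenerate \emph{subcontinua}, i.e.\ compact connected sets; you are right that one must first manufacture an honest continuum (your $C=\overline{V}$), and that in a general metric space --- the theorem assumes only ``locally connected metric space without isolated points'' --- there is no guarantee that $\overline{V}$ is compact, nor even that a non-degenerate connected open set contains any non-degenerate subcontinuum. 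So the gap you flag is genuine, but it is a gap in the paper's own proof as well, silently elided; your proposed fixes (assume $X$ compact, as the section's introductory sentence in fact does, or assume local compactness, or isolate a lemma producing subcontinua inside open connected sets) are exactly what is needed to make the argument airtight. Two further slips in the paper that your write-up avoids: the paper asserts that the original point $x$ lies in $\mathrm{Int}(\overline{S})$, which does not follow from $\mathrm{Int}(\overline{S})\neq\emptyset$ (you correctly work with an arbitrary interior point $p$), and the paper uses $U\subseteq\overline{S}$ to get distance bounds $\leq c'$ without remarking that $S$ is closed, a fact you prove explicitly and which is what legitimizes replacing $\overline{S}$ by $S$.
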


\begin{proof}
Let $\phi_{1,\infty}$ be cw-expansive with constant of cw-expansiveness $c>0$. Choose $0<c'
<c/2$. We claim that $c'$ will be a meagre expansiveness constant for $\phi_{1,\infty}$. If our claim fails to hold, then there exists some $x\in X$ such that the set $S=\{y\in X:d(\phi_1^i(x),\phi_1^i(y))\}\leq c',i> 0\}$ is not nowhere dense, i.e., $Int(\overline{S})\neq \emptyset$. As $X$ is locally connected, we can find an open connected subset $U$ of $X$ such that $x\in U\subset$ Int$(\overline{S})$, which implies $U\subseteq \overline{S}$ and hence $d(\phi_1^i(x_1),\phi_1^i(x_2))\leq d(\phi_1^i(x_1),\phi_1^i(x))+ d(\phi_1^i(x),\phi_1^i(x_2)\leq 2\c' \leq c$, for all $i>0$, where $x_1, x_2\in U$. Therefore, we get that $diam(\phi_1^i(U))<c$, for some $i> 0$. Since $c$ is a cw-expansiveness constant for $\phi_{1,\infty}$, we get that $U=\{x\}$ which further implies $x$ is an isolated point of $X$ which contradicts our hypothesis. Thus, $\phi_{1,\infty}$ is meagre-expansive with a constant of meagre-expansiveness $c'$.  
\end{proof}

The above result together with Theorem 3.10 gives us the following Corollary.

\begin{Cor}
Let $(X,d)$ be a locally connected metric space without isolated points and $\phi_n:X\rightarrow X, n=1,2,\ldots$ be a sequence of continuous maps. If $(X,\phi_{1,\infty})$ is $\aleph_0$-expansive, then it is meagre-expansive.
\end{Cor}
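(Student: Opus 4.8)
The plan is to obtain this Corollary by composing two results already established in the excerpt, so that no new argument is needed beyond verifying that the hypotheses line up. The assumptions of the Corollary split naturally into two groups: the dynamical assumption that $(X,\phi_{1,\infty})$ is $\aleph_0$-expansive, which is exactly the input required by Theorem 3.10, and the topological assumptions that $X$ is locally connected and has no isolated points, which are precisely the standing hypotheses of the preceding Theorem (the one asserting cw-expansive $\Rightarrow$ meagre-expansive). The strategy is therefore to chain the implications $\aleph_0\text{-expansive} \Rightarrow \text{cw-expansive} \Rightarrow \text{meagre-expansive}$.

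First I would invoke Theorem 3.10, which states that on any metric space carrying a sequence of continuous maps, $\aleph_0$-expansiveness implies cw-expansiveness. Since the Corollary supplies exactly such data (a metric space $X$ and continuous maps $\phi_n$) and assumes $(X,\phi_{1,\infty})$ to be $\aleph_0$-expansive, Theorem 3.10 applies verbatim and yields that $(X,\phi_{1,\infty})$ is cw-expansive. It is worth noting that Theorem 3.10 imposes no extra topological requirements, so this first step is unconditional and uses none of the special structure of $X$.

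Next I would apply the immediately preceding Theorem. This is where the additional hypotheses become essential: that Theorem concludes cw-expansive $\Rightarrow$ meagre-expansive precisely when $X$ is locally connected and without isolated points. Both conditions are assumed in the Corollary, and we have just shown $(X,\phi_{1,\infty})$ is cw-expansive, so the Theorem delivers meagre-expansiveness of $(X,\phi_{1,\infty})$, completing the proof.

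The substantive content lies entirely in the cw-expansive $\Rightarrow$ meagre-expansive Theorem, which is already proved: there local connectedness is used to trap $x$ in a connected open set $U \subseteq \mathrm{Int}(\overline{S})$ on which every pair of orbits stays within the cw-expansiveness constant, forcing $\mathrm{diam}(\phi_1^i(U)) < c$ for all $i$ and hence $U = \{x\}$, so that $x$ would be isolated — contradicting the hypothesis. Since that argument is in hand, the present Corollary is a formal consequence and presents no real obstacle; the only thing to check is that the hypothesis sets of the two cited results are jointly satisfied, which they are exactly.
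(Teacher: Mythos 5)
Your proposal is correct and is exactly the paper's argument: the paper derives this Corollary by combining Theorem 3.10 ($\aleph_0$-expansive $\Rightarrow$ cw-expansive, with no topological hypotheses) with the immediately preceding Theorem (cw-expansive $\Rightarrow$ meagre-expansive on a locally connected metric space without isolated points). Your verification that the hypotheses of the two cited results are jointly satisfied is all that is needed.
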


In the next example we show that the converse of the above Corollary need not be true. 

\begin{exm} \end{exm}
Consider $X$ to be a three torus $T^3= S^1 \times T^2 $, where $S^1$ is the unit circle and $T^2$ is the two torus.
Define $g_1: T^3\rightarrow T^3$ by $g_1(\theta,z)=(\theta,g(z))$, for some $(\theta, (z))\in S^1 \times T^2$, where $g:T^2\rightarrow T^2$ is defined as 
\\$g(z)=g(z_2,z_2)= \begin{bmatrix}
2 & 1 \\ 1& 1
\end{bmatrix} \begin{bmatrix}
z_1 \\ z_2 
\end{bmatrix}$ mod 1. Let $g_2$ be the identity map of the three torus. \\We define the non-autonomous system $(X,\phi_{1,\infty})$ as follows:
\[ \phi_n(x) = \begin{cases}
g_1,  & \text{for} \ n \hspace{0.1cm} odd \\
g_2, & \text{for} \ n\hspace{0.1cm} even.
\end{cases} \]
Since $g$ is an expansive map of the two torus, therefore $S_{\delta}(\theta, z)=[\theta-\delta, \theta+\delta]\times \{z\}$, for all $(\theta, z)\in T^3$ and for all $\delta>0$, where $S_{\delta}(\theta, z)=\{(\theta',y):d(f_1^i(\theta,z),f_1^i(\theta',y))<\delta, i \in \mathbb{Z} \}$.
Thus, we have that $(X,\phi_{1,\infty})$ is meagre expansive but not $\aleph_0$-expansive taking  expansiveness constant $\delta>0$ as above in both cases. 
\begin{rmk}The following diagrams shows the relations among various forms of expansiveness in a non-autonomous discrete system defined on a locally connected metric space without isolated points:
\tikzset{
  startstop/.style={
    rectangle, 
    rounded corners,
    minimum width=3cm, 
    minimum height=1cm,
    align=center, 
    draw=black, 
    fill=red!30
    },
  process/.style={
    rectangle, 
    minimum width=3cm, 
    minimum height=1cm, 
    align=center, 
    draw=black, 
    fill=blue!30
    },
  decision/.style={
    rectangle, 
    minimum width=3cm, 
    minimum height=1cm, align=center, 
    draw=black },
  arrow/.style={thick,->,>=stealth},
  dec/.style={
    ellipse, 
    align=center, 
    draw=black},
}
\tikzstyle{line} = [draw, ->, double, thick]

\begin{center}

%\resizebox{!}{.8\textheight}{% if required
\begin{tikzpicture}[
  start chain=going below,
  every join/.style={arrow},
  node distance=0.6cm
  ]
\node (in1) [dec,on chain] {Expansive};
\node (out1) [dec, right=of in1, node distance=5cm] {n-expansive};
\node (out2) [dec, right=of out1,node distance=4cm] {$\aleph_0$-expansive};
\node (out3) [dec,below=of out2,node distance=4cm] {cw-expansive};
\node (out4) [dec,left=of out3,node distance=4cm] {Meagre-expansive};

\path [line] (in1) -- (out1);
\path [line] (out1) -- (out2);
\path [line] (out2) -- (out3);
\path [line] (out3) -- (out4);

\end{tikzpicture}
\end{center}
The converse implication does not hold in any of the cases. We have given Example 4.1 which shows meagre-expansiveness need not imply $\aleph_0$-expansiveness. 
\\There are examples of cw-expansive autonomous systems which are not expansive \cite{MR1222517} and n-expansive autonomous systems which are not expansive\cite{MR3694815}. It will be interesting to construct such examples for non-autonomous discrete systems.

\end{rmk}

\begin{thm}
Let $(X,d)$ be a Lindel{\"o}f metric space and $\phi_n:X\rightarrow X, n=1,2,\ldots$ be equicontinuous homeomorphims. Then, $(X,\phi_{1,\infty})$ is never meagre-expansive.
\end{thm}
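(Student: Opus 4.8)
The plan is to derive a contradiction from the assumption of meagre-expansiveness by showing that equicontinuity forces each companion set to contain an open ball, which no nowhere dense set can do. This mirrors the mechanism behind Theorem 3.11, except that here the contradiction is purely local, so the covering step is not even required.

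Suppose, for contradiction, that $(X,\phi_{1,\infty})$ is meagre-expansive with constant $c>0$, and fix an arbitrary $x\in X$. Write $S_c(x)=\{y\in X: d(\phi_1^i(x),\phi_1^i(y))\leq c \text{ for all } i\}$ for the associated companion set, which by hypothesis is nowhere dense, so that $Int(\overline{S_c(x)})=\emptyset$. First I would invoke equicontinuity at $x$ with $\epsilon=c$ to obtain a radius $c'>0$ (which I may take to satisfy $c'\leq c$) such that $d(x,y)<c'$ implies $d(\phi_1^i(x),\phi_1^i(y))\leq c$ for every $i$; this says precisely that the open ball $B_d(x,c')$ is contained in $S_c(x)$. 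Since $x\in B_d(x,c')$, the ball is a nonempty open set, and from $B_d(x,c')\subseteq S_c(x)\subseteq \overline{S_c(x)}$ together with its openness I would conclude $B_d(x,c')\subseteq Int(\overline{S_c(x)})$. Hence $Int(\overline{S_c(x)})\neq\emptyset$, contradicting nowhere density. As $x$ was arbitrary, no such $c$ can exist, and $(X,\phi_{1,\infty})$ is never meagre-expansive.

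The step I expect to be the main obstacle is the clean extraction of a single radius $c'$ at $x$ that controls all the iterates occurring in $S_c(x)$. For a family of homeomorphisms the companion set ranges over all $i\in\mathbb{Z}$, while the stated notion of equicontinuity bounds only the forward iterates $\phi_1^i$ with $i>0$. To handle the backward iterates $\phi_1^{-i}=(\phi_1^i)^{-1}$ I would apply equicontinuity to the inverse system $(\phi_{1,\infty})^{-1}=\{\phi_n^{-1}\}_{n=1}^{\infty}$ at $x$ and take $c'$ to be the smaller of the two resulting radii, so that both directions are controlled at once. (If one instead adopts the one-sided, $\mathbb{N}$-indexed formulation of meagre-expansiveness, forward equicontinuity alone already yields $B_d(x,c')\subseteq S_c(x)$.) Once this radius is in hand the containment, and therefore the contradiction with nowhere density, is immediate; notably, the Lindel\"{o}f hypothesis plays no essential role beyond aligning the statement with its $\aleph_0$ counterpart, since the contradiction is obtained at a single point.
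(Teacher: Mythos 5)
Your proof is correct, but it is genuinely different from the paper's. The paper argues through its measure-theoretic machinery: it first shows that $\mu(\mathrm{Stab}(X,\phi_{1,\infty}))=0$ for every meagre-expansive measure $\mu$ (this is where Lindel\"{o}f enters, to extract a countable subcover of $\mathrm{Stab}$ by the sets $\mathrm{Int}\,S_c(x_i)$, each of measure zero); it then observes that if the system were meagre-expansive, every Borel measure would be a meagre-expansive measure (the companion sets are closed with empty interior), while equicontinuity forces $\mathrm{Stab}(X,\phi_{1,\infty})=X$, yielding $\mu(X)=0$ for every Borel measure --- a contradiction. Your argument bypasses measures and stable points entirely: equicontinuity at a single point $x$ places a nonempty open ball inside $S_c(x)\subseteq \overline{S_c(x)}$, which immediately contradicts nowhere density. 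This is more elementary and, as you note, proves a strictly stronger statement, since the Lindel\"{o}f hypothesis is never used; the paper needs it only because countable additivity of measures demands a countable cover. What the paper's longer route buys is the auxiliary claim about meagre-expansive measures, which it reuses in Remark 4.2 ($\mu(\mathrm{Int}(\mathrm{Fix}))=0$). One further point in your favor: both proofs face the same subtlety that the companion set for homeomorphisms is indexed by $i\in\mathbb{Z}$ while the paper's Definition 2.2 of equicontinuity controls only forward iterates; you flag this explicitly and repair it (via equicontinuity of the inverse system, or by the convention $\phi_1^i=\mathrm{id}$ for $i\leq 0$), whereas the paper's assertion that equicontinuity gives $\mathrm{Stab}(X,\phi_{1,\infty})=X$ silently glosses over exactly the same issue.
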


\begin{proof}
We first claim that $\mu(Stab(X,\phi_{1,\infty}))=0$, for every meagre-expansive homeomorphism $\mu$ of $\phi_{1,\infty}$. Let $\mu$ be a meagre-expansive measure for $\phi_{1,\infty}$ with meagre-expansiveness constant $c>0$. Let $x\in Stab(X,\phi_{1,\infty})$, then $x\in IntS_c(x)$, where $S_c(x)=\{y\in X: d(\phi_1^i(x),\phi_1^i(y))\leq c,i\in \mathbb{Z}\}$. Thus, we get $\{Int(S_c(x)):x\in Stab(X,\phi_{1,\infty})\}$ is an open cover of $Stab(X,\phi_{1,\infty})$ and since $X$ is Lindel{\"o}f, there exist $x_1, x_2,\ldots \in$ Stab$(X,\phi_{1,\infty})$ such that Stab$(X,\phi_{1,\infty})\subseteq  \bigcup_{i=1}^{\infty}$Int$S_c(x_i)$ and hence $\mu$(Stab$(X,\phi_{1,\infty}))=0$.
\\Now, assume that $(X,\phi_{1,\infty})$ is meagre-expansive with constant of meagre-expansiveness $c>0$ and $\mu$ be any Borel measure of $X$. Then, Int$S_c(x)$ being empty, $\mu($Int$S_c(x))=0$, for every $x\in X$. Thus, $\mu$ is a meagre-expansive measure for $\phi_{1,\infty}$. As $\phi_n, n=1,2,\ldots$, is equicontinuous, we have $Stab(X,\phi_{1,\infty})=X$ and by the above claim, $\mu(X)=0$, which is a contradiction. Thus, $(X,\phi_{1,\infty})$ is never meagre-expansive.
\end{proof}

It is easy to observe that for the non-autonomous system $(X,\phi_{1,\infty})$, Fix$(X,\phi_{1,\infty})\subseteq  $ Stab$(X,\phi_{1,\infty})$ and hence we have the following remark:

\begin{rmk}
For a non-autonomous system of homeomorphisms on a Lindel{\"o}f metric space, $\mu$(Int(Fix$(X,\phi_{1,\infty})))=0$, with respect to every meagre expansive measure $\mu$ of $\phi_{1,\infty}$.  
\end{rmk}

\section*{Acknowledgement}
The first author is funded by GOVERNMENT OF INDIA, MINISTRY OF SCIENCE and TECHNOLOGY No: DST/INSPIRE Fellowship/[IF160750].

\end{document}